\documentclass{amsart}

\usepackage{amsmath,amssymb,amsfonts,mathtools,mathrsfs}
\usepackage{tikz-cd}
\usepackage{microtype}
\usepackage[colorlinks=true,linkcolor=blue,citecolor=blue,urlcolor=blue]{hyperref}

\newtheorem{theorem}{Theorem}[section]
\newtheorem{lemma}[theorem]{Lemma}
\newtheorem{proposition}[theorem]{Proposition}
\newtheorem{corollary}[theorem]{Corollary}
\theoremstyle{definition}

\newtheorem{example}[theorem]{Example}
\newtheorem{question}[theorem]{Question}
\newtheorem{remark}[theorem]{Remark}

\newcommand{\id}{\operatorname{id}}
\newcommand{\End}{\operatorname{End}}
\newcommand{\Fun}{\operatorname{Fun}}
\newcommand{\Irr}{\operatorname{Irr}}
\newcommand{\Hom}{\operatorname{Hom}}
\newcommand{\Aut}{\operatorname{Aut}}
\newcommand{\Rep}{\operatorname{Rep}}
\newcommand{\Corep}{\operatorname{Corep}}
\newcommand{\Vect}{\operatorname{Vect}}
\newcommand{\Rad}{\operatorname{Rad}}
\newcommand{\rank}{\operatorname{rank}}
\newcommand{\Map}{\operatorname{Map}}
\newcommand{\Ind}{\operatorname{Ind}}

\newcommand{\C}{\mathscr{C}}
\newcommand{\M}{\mathscr{M}}

\newcommand{\V}{\mathscr{V}}

\newcommand{\Ctimes}{\mathbb{C}^{\times}}
\newcommand{\Gm}{\mathbb{G}_m}

\numberwithin{equation}{section}

\begin{document}

\title[Hopf $2$-cocycles for certain reductive groups]
{Hopf $2$-cocycles for certain affine algebraic reductive groups}

\author{Shlomo Gelaki}
\address{Department of Mathematics, Iowa State University, Ames, IA 50011, USA}
\email{gelaki@iastate.edu}

\date{August 27, 2026}

\keywords{affine algebraic reductive groups; semisimple tensor categories;
equivariantization; fiber functors; Hopf $2$-cocycles}

\begin{abstract}
Let $G$ be an affine algebraic reductive group over $\mathbb{C}$ whose identity
component is a torus $T$, and let $K:=G/T$.  We realize $\Rep(G)$ as an
equivariantization $\Rep(T)^K$ and describe its finite indecomposable semisimple
module categories in terms of equivariant module-category data over $\Rep(T)$.
For such an equivariantization, rank one is characterized by transitivity of the
induced action on the simple objects of the underlying $\Rep(T)$-module category,
and nondegeneracy of a projective cocycle on a point stabilizer.  We use this
criterion to parametrize fiber functors on $\Rep(G)$, prove that every such
fiber functor is classical, hence arises from a Hopf $2$-cocycle on
$\mathscr{O}(G)$, and classify (minimal) Hopf $2$-cocycle on
$\mathscr{O}(G)$. For commutative direct products $G=T\times K$, we also give the
canonical K\"unneth decomposition of the group of gauge classes, including the
mixed component, and compare it with our classification.
\end{abstract}

\maketitle
\tableofcontents

\section{Introduction}\label{sec:introduction}

Let $G$ be an affine algebraic group over $\mathbb{C}$, and let $\Rep(G)$ denote
the tensor category of finite-dimensional rational representations of $G$.
A Hopf $2$-cocycle on $\mathscr{O}(G)$ is equivalently a tensor structure on the
usual forgetful functor
\[
 F:\Rep(G)\longrightarrow \Vect.
\]
We call a tensor functor whose underlying linear functor is isomorphic to $F$ a
{\bf classical fiber functor}.

Hopf $2$-cocycles for finite groups were classified by Movshev \cite{Mo}, and the
nilpotent affine algebraic case was treated in \cite{EG1,EG2,G1}.  A general
classification for affine algebraic reductive groups remains open.  One of the
motivating questions of this paper is the following:

\begin{question}\cite[Question 7.2]{EG1}\label{q:torus}
If an affine algebraic reductive group admits a minimal Hopf $2$-cocycle, must its
identity component be a torus? \qed
\end{question}

We study the class of reductive groups for which the identity component is already
a torus.  Thus, throughout the main part of the paper, we have an exact sequence of affine algebraic groups 
\[
 1\longrightarrow T\longrightarrow G\longrightarrow K\longrightarrow 1,
\]
where $T$ is a {\bf complex torus} and $K$ is a {\bf finite} group.

The first ingredient is a tensor equivalence
\[
 \Rep(G)\simeq \Rep(T)^K.
\]
The action of $K$ on $\Rep(T)$ remembers not only the conjugation action of $K$ on
$T$, but also the extension class of $G$: the latter enters through the coherence
isomorphisms of the categorical action.  The second ingredient is the description
of finite semisimple module categories over an equivariantization.  In the present
setting, these are obtained from $L$-equivariant module categories over $\Rep(T)$,
where $L\subseteq K$ is a subgroup.

The decisive point for {\bf rank-one} module categories is an orbit--stabilizer
criterion.  If a finite group $L$ acts equivariantly on a finite semisimple category
$\M$, then the simple objects of $\M^L$ are indexed by orbits in $\Irr(\M)$ together
with irreducible projective representations of the corresponding stabilizers.
Consequently, $\M^L$ has rank one precisely when the action on $\Irr(\M)$ is
transitive and the stabilizer cocycle is nondegenerate.  In particular, rank one
does {\bf not} force the underlying category $\M$ itself to have rank one.
This phenomenon is exactly what produces mixed Hopf $2$-cocycles.

Using this criterion, we parametrize all rank-one module categories over
$\Rep(G)$ by the full equivariant data
\[
 (S,\Psi,L,U,\tau,\mu),
\]
subject to explicit coherence equations, transitivity of $U$, and nondegeneracy of
a stabilizer cocycle, and prove that every corresponding fiber functor has the
same dimension function as $F$ (see Theorem \ref{thm:module-and-fiber-classification}).  Since $\Rep(G)$ is semisimple, its underlying
linear functor is therefore isomorphic to $F$, and the tensor structure yields a
Hopf $2$-cocycle on $\mathscr{O}(G)$. Thus, we deduce the classification of (minimal) Hopf $2$-cocycle on $\mathscr{O}(G)$ (see Corollary \ref{cor:hopf-cocycle-classification} and Theorem \ref{prop:factorization}).

For a {\bf commutative} direct product $G=T\times K$, the character group is
$\widehat T\times\widehat K$, and the classification can be expressed canonically
as
\[
 H^2(\widehat T\times\widehat K,\Ctimes)
 \cong
 H^2(\widehat T,\Ctimes)
 \times \Hom(\widehat T\otimes_{\mathbb Z}\widehat K,\Ctimes)
 \times H^2(\widehat K,\Ctimes)
\]
(see Theorem \ref{thm:kunneth} and Corollary 
\ref{cor:kunneth-equivariantization-minimality}). The middle factor is essential since it can pair the radicals of the two diagonal
blocks, so minimality is governed by the full alternating bicharacter rather than
by nondegeneracy on either factor separately.

\medskip
\noindent
\textbf{Acknowledgements.}
This work was supported by Simons Foundation Award 963288.

\section{Preliminaries}\label{sec:preliminaries}

We work over the field $\mathbb{C}$ of complex numbers. We use the standard conventions for Hopf algebras and tensor categories, and refer to \cite{A,EGNO,M,S} for any unexplained notion.

\subsection{\texorpdfstring{Hopf $2$-cocycles}{Hopf 2-cocycles}}\label{sec:hopf-cocycles}

Let $H$ be a Hopf algebra over $\mathbb{C}$.  A \emph{Hopf $2$-cocycle} on $H$ is
a convolution-invertible functional $J\in(H\otimes H)^*$ satisfying
\begin{align*}
 J(a_1b_1,c)J(a_2,b_2)&=J(a,b_1c_1)J(b_2,c_2),\\
 J(a,1)&=\varepsilon(a)=J(1,a)
\end{align*}
for all $a,b,c\in H$.  Here and below we use Sweedler notation.

The cocycle $J$ defines a new Hopf algebra $H^J$.  Its underlying coalgebra is $H$, while its
multiplication is
\[
 a\cdot_J b=J^{-1}(a_1,b_1)a_2b_2J(a_3,b_3).
\]
Equivalently, $J$ defines a tensor structure on the forgetful functor
$$F:\Corep(H)\to\Vect.$$
For right $H$-comodules $X$ and $Y$, with coactions
$$x\mapsto x_{(0)}\otimes x_{(1)},\quad y\mapsto y_{(0)}\otimes y_{(1),}$$ 
the tensor constraint is
\[
 J_{X,Y}(x\otimes y)
 =J^{-1}(x_{(1)},y_{(1)})x_{(0)}\otimes y_{(0)}.
\]

For $a\in(H^*)^\times$, define
\[
 J^a(h,k)
 =a(h_1k_1)J(h_2,k_2)a^{-1}(h_3)a^{-1}(k_3).
\]
Two Hopf $2$-cocycles on $H$ are {\bf gauge equivalent} if they are related in this way.
Gauge-equivalence classes are in bijection with tensor structures on $F$ up to
monoidal natural isomorphism.

\subsection{Cotriangular Hopf algebras}\label{sec:cotriangular}

A cotriangular structure on a Hopf algebra $H$ is a convolution-invertible
functional $R\in(H\otimes H)^*$ satisfying
\begin{align*}
 R^{-1}&=R_{21},\\
 R(a,bc)&=R(a_1,b)R(a_2,c),\\
 R(ab,c)&=R(b,c_1)R(a,c_2),\\
 R(a_1,b_1)b_2a_2&=a_1b_1R(a_2,b_2)
\end{align*}
for all $a,b,c\in H$. It is {\bf minimal} if $R$ is nondegenerate.  By \cite[Proposition 2.1]{G1}, every
cotriangular Hopf algebra has a unique minimal cotriangular Hopf algebra quotient.

If $J$ is a Hopf $2$-cocycle on a cotriangular Hopf algebra $(H,R)$, then
\[
 R^J:=J_{21}^{-1}RJ
\]
is a cotriangular structure on $H^J$.  The cocycle $J$ is called {\bf minimal} if
$R^J$ is nondegenerate.

\subsection{Action groupoids and twisted action-groupoid algebras}
\label{sec:twisted-action-groupoid}
Let a group $\Gamma$ act on a set $X$.  The {\bf action groupoid}
$\Gamma\ltimes X$ is the category whose objects are the elements of $X$ and whose
arrows are
\[
 [g,x]:x\longrightarrow gx,
 \qquad g\in\Gamma,\ x\in X.
\]
Thus, two arrows $[g,x']$ and $[h,x]$ are composable precisely when $x'=hx$, and
then
\[
 [g,hx]\circ[h,x]=[gh,x].
\]
The identity arrow at $x$ is $[1,x]$, and the inverse of $[g,x]$ is
$[g^{-1},gx]$.

A normalized $2$-cocycle on this groupoid is a function
\[
 \omega:\Gamma\times\Gamma\times X\longrightarrow\Ctimes
\]
such that
\begin{equation}\label{eq:groupoid-cocycle}
 \omega(g,h;rx)\omega(gh,r;x)
 =\omega(h,r;x)\omega(g,hr;x)
\end{equation}
for all $g,h,r\in\Gamma$ and $x\in X$, and
$\omega(1,g;x)=\omega(g,1;x)=1$.  The corresponding {\bf twisted action-groupoid
algebra} $\mathbb C^{\omega}[\Gamma\ltimes X]$ is the vector space of finitely
supported linear combinations of the arrows, with multiplication
\begin{equation}\label{eq:twisted-groupoid-product}
 [g,x'][h,x]
 =
 \begin{cases}
  \omega(g,h;x)[gh,x],&x'=hx,\\
  0,&x'\neq hx.
 \end{cases}
\end{equation}
Equation~\eqref{eq:groupoid-cocycle} is exactly the associativity condition for
\eqref{eq:twisted-groupoid-product}.  The elements
\[
 e_x:=[1,x],\qquad x\in X,
\]
are pairwise orthogonal idempotents; if $X$ is finite, their sum is the identity
of $\mathbb C^{\omega}[\Gamma\ltimes X]$.

\begin{lemma}[Reduction of a transitive action groupoid]
\label{lem:transitive-groupoid-reduction}
Assume that the action of $\Gamma$ on the finite set $X$ is transitive.  Fix
$x_0\in X$, let
\[
 H=\Gamma_{x_0}:=\{g\in\Gamma\mid gx_0=x_0\},
 \qquad
 \omega_0(g,h):=\omega(g,h;x_0),\quad g,h\in H,
\]
and set $\mathcal A:=\mathbb C^{\omega}[\Gamma\ltimes X]$.  Then
\[
 e_{x_0}\mathcal A e_{x_0}\cong\mathbb C^{\omega_0}[H].
\]
Moreover, $e_{x_0}$ is a full idempotent and, after choosing one arrow from
$x_0$ to every object of $X$, there is an algebra isomorphism
\begin{equation}\label{eq:transitive-groupoid-matrix-reduction}
 \mathcal A
 \cong
 \operatorname{Mat}_{|X|}\!\left(\mathbb C^{\omega_0}[H]\right).
\end{equation}
Consequently, restriction to the corner gives an equivalence
\[
 {\rm Mod}(\mathcal A)
 \simeq
 {\rm Mod}(\mathbb C^{\omega_0}[H]).
\]
\end{lemma}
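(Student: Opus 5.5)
The corner computation comes first. A basis of $e_{x_0}\mathcal A e_{x_0}$ consists of the arrows $[g,x]$ with source $x=x_0$ and target $gx=x_0$. These are exactly the arrows $[g,x_0]$ with $g\in H$. By \eqref{eq:twisted-groupoid-product}, $[g,x_0][h,x_0]=\omega(g,h;x_0)[gh,x_0]$, because the composability condition $x_0=hx_0$ holds. So $[g,x_0]\mapsto g$ is an algebra isomorphism $e_{x_0}\mathcal A e_{x_0}\cong\mathbb C^{\omega_0}[H]$. Restricting \eqref{eq:groupoid-cocycle} to $g,h,r\in H$ and $x=x_0$ shows that $\omega_0$ is a normalized $2$-cocycle on $H$, so the target is a genuine twisted group algebra.

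Next, the matrix units. By transitivity, for each $x\in X$ we choose $g_x\in\Gamma$ with $g_xx_0=x$, taking $g_{x_0}=1$. We set $u_x:=[g_x,x_0]$, an arrow $x_0\to x$, and $v_x:=\omega(g_x^{-1},g_x;x_0)^{-1}[g_x^{-1},x]$, an arrow $x\to x_0$. The product rule and normalization give $v_xu_x=e_{x_0}$. For $u_xv_x$, we get a scalar multiple of $[1,x]=e_x$. To see that the scalar is $1$, we apply \eqref{eq:groupoid-cocycle} with $(g,h,r)=(g_x,g_x^{-1},g_x)$ at $x_0$: it reads $\omega(g_x,g_x^{-1};x)\,\omega(1,g_x;x_0)=\omega(g_x^{-1},g_x;x_0)\,\omega(g_x,1;x_0)$, so $\omega(g_x,g_x^{-1};x)=\omega(g_x^{-1},g_x;x_0)$. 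This identity is the one small check I expect to require care; everything else is bookkeeping.

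With these units in hand, define $E_{xy}:=u_xv_y$. Then $E_{xy}E_{y'z}=\delta_{yy'}E_{xz}$, and $\sum_x E_{xx}=\sum_xe_x=1$ since $X$ is finite. Every arrow $a\colon y\to x$ factors as $a=u_x(v_xau_y)v_y$ with $v_xau_y\in e_{x_0}\mathcal Ae_{x_0}$. So the map $a\mapsto (v_xau_y)_{x,y}$ is an algebra isomorphism $\mathcal A\to\operatorname{Mat}_{|X|}(e_{x_0}\mathcal Ae_{x_0})$, with inverse $(c_{xy})\mapsto\sum u_xc_{xy}v_y$. This is the standard description of an algebra with a complete set of matrix units. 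Composing with the corner isomorphism gives \eqref{eq:transitive-groupoid-matrix-reduction}.

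Finally, fullness and the Morita step. From $e_x=u_xe_{x_0}v_x$ we get $e_x\in\mathcal Ae_{x_0}\mathcal A$, hence $1=\sum_x e_x\in\mathcal Ae_{x_0}\mathcal A$, so $e_{x_0}$ is full. For a full idempotent $e$, the functor $M\mapsto eM$ is an equivalence ${\rm Mod}(\mathcal A)\simeq{\rm Mod}(e\mathcal Ae)$ with quasi-inverse $N\mapsto\mathcal Ae\otimes_{e\mathcal Ae}N$. Alternatively, this follows directly from Morita equivalence of a ring with its matrix ring.
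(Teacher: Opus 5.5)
Your proof is correct and follows the paper's argument essentially step for step. It uses the same corner computation, the same connecting arrows $u_x=[g_x,x_0]$ and rescaled $v_x$ (the paper's $p_x,q_x$), the same matrix-unit isomorphism $a\mapsto(v_xau_y)_{x,y}$, and the same full-idempotent Morita equivalence; your explicit instance of \eqref{eq:groupoid-cocycle} with $(g,h,r)=(g_x,g_x^{-1},g_x)$ at $x_0$ fills in the scalar check that the paper only asserts.
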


\begin{proof}
The corner $e_{x_0}\mathcal A e_{x_0}$ is spanned by the loops $[h,x_0]$ with
$h\in H$, and their multiplication is
\[
 [g,x_0][h,x_0]=\omega_0(g,h)[gh,x_0].
\]
This proves the identification with $\mathbb C^{\omega_0}[H]$.

For each $x\in X$, choose $r_x\in\Gamma$ with $r_xx_0=x$, taking
$r_{x_0}=1$, and let
\[
 p_x:=[r_x,x_0].
\]
The two products
\[
 [r_x^{-1},x]p_x
 \quad\text{and}\quad
 p_x[r_x^{-1},x]
\]
are nonzero scalar multiples of $e_{x_0}$ and $e_x$, respectively.  The cocycle
identity shows that the two scalars are equal.  Hence one can rescale
$[r_x^{-1},x]$ to an arrow $q_x$ such that
\begin{equation}\label{eq:connecting-arrows}
 q_xp_x=e_{x_0},
 \qquad
 p_xq_x=e_x.
\end{equation}
In particular, $e_x=p_xe_{x_0}q_x$ belongs to
$\mathcal A e_{x_0}\mathcal A$ for every $x$, so $e_{x_0}$ is full.

Let $E_{x',x}$ denote the standard matrix unit indexed by $x',x\in X$.  For
$a\in e_{x'}\mathcal A e_x$, define
\begin{equation}\label{eq:explicit-groupoid-matrix-map}
 \Phi(a)=E_{x',x}\otimes q_{x'}ap_x.
\end{equation}
The middle factor belongs to $e_{x_0}\mathcal A e_{x_0}$.  Using
\eqref{eq:connecting-arrows}, one checks that
\[
 \Phi(ab)=\Phi(a)\Phi(b)
\]
whenever $a$ and $b$ are composable, while both sides vanish otherwise.  The
inverse map is
\[
 E_{x',x}\otimes b\longmapsto p_{x'}bq_x,
 \qquad
 b\in e_{x_0}\mathcal A e_{x_0}.
\]
This proves \eqref{eq:transitive-groupoid-matrix-reduction}; the stated Morita
equivalence follows, for example, from the mutually inverse functors
\[
 N\longmapsto e_{x_0}N,
 \qquad
 W\longmapsto \mathcal A e_{x_0}
       \otimes_{e_{x_0}\mathcal A e_{x_0}}W.
\]
\end{proof}

\subsection{Actions and equivariantizations}\label{sec:actions}

Let $K$ be a {\bf finite} group and $\C$ a {\bf semisimple} tensor category (not necessarily finite). Recall (see, e.g., \cite[Definition 2.7.1]{EGNO}) that an action of $K$
on $\C$ consists of tensor autoequivalences $A_k$ and tensor natural
isomorphisms
\[
 \gamma_{k,\ell}:A_kA_\ell\xrightarrow{\cong}A_{k\ell},
 \qquad k,\ell\in K,
\]
satisfying the unit and pentagon axioms.

Recall (see, e.g., \cite[Definition 2.7.2]{EGNO}) that a $K$-equivariant object is a pair $(X,u)$, where $X\in\C$ and
\[
 u_k:A_k(X)\xrightarrow{\cong}X
\]
satisfy
\begin{equation}\label{eq:equivariant-object}
 u_kA_k(u_\ell)=u_{k\ell}\gamma_{k,\ell}(X).
\end{equation}
The category of equivariant objects is the equivariantization $\C^K$.
For the trivial action on $\Vect$, one recovers $\Vect^K\simeq\Rep(K)$.

\subsection{Equivariant module categories}\label{sec:equivariant-modules}

Let $\M$ be a left $\C$-module category.  For $k\in K$, define the twisted module
category $\M^k$ to have the same underlying category as $\M$, and action
\begin{equation}\label{eq:module-twist}
 X\otimes^k M:=A_{k^{-1}}(X)\otimes M.
\end{equation}
With this convention, $k\mapsto\M^k$ is a left action on equivalence classes of
$\C$-module categories.  More precisely, the coherence $\gamma$ induces canonical
module equivalences
\[
 \Gamma_{k,\ell}:(\M^\ell)^k\xrightarrow{\simeq}\M^{k\ell};
\]
the module structure of $\Gamma_{k,\ell}$ is induced by
$\gamma_{\ell^{-1},k^{-1}}$.

Let $L\subseteq K$ be a subgroup.  Recall \cite{ENO} that an $L$-equivariant structure on $\M$ consists of module
equivalences
\[
 U_k:\M^k\xrightarrow{\simeq}\M,
 \qquad k\in L,
\]
and module natural isomorphisms
\begin{equation}\label{eq:mu-abstract}
 \mu_{k,\ell}:U_k\circ(U_\ell)^k
 \xrightarrow{\cong}
 U_{k\ell}\circ\Gamma_{k,\ell},
\end{equation}
subject to the usual unit and pentagon conditions.  We always use normalized data:
$U_1=\id$, and the unit components of $\mu$ are identities.

An equivariant object is a pair $(M,v)$ with
\[
 v_k:U_k(M)\xrightarrow{\cong}M
\]
satisfying
\begin{equation}\label{eq:equivariant-module-object}
 v_kU_k(v_\ell)=v_{k\ell}\mu_{k,\ell}(M).
\end{equation}
The resulting finite semisimple category is denoted $\M^L$.  Recall \cite{ENO} that it carries a natural
$\C^K$-module structure; on underlying objects the action is the original action
$X\otimes M$ in $\M$.

For the trivial action on $\Vect$, an equivariant structure is determined by a
normalized cocycle $\mu\in Z^2(L,\Ctimes)$, and
\[
 \Vect^L\simeq\Rep(L,\mu),
\]
where $\rho(k)\rho(\ell)=\mu(k,\ell)\rho(k\ell)$.

\subsection{Simple objects and rank}\label{sec:orbit-stabilizer}

Let $L$ act equivariantly on a finite semisimple category $\M$.  Then the functors $U_k$
permute $\Irr(\M)$.  For $M\in\Irr(\M)$, let
\[
 L_M:=\{k\in L\mid U_k(M)\cong M\};
\]
it is a subgroup of $L$, hence of $K$. After choosing isomorphisms $U_k(M)\cong M$ for $k\in L_M$, the coherence data
induces a well-defined cohomology class
$$[\alpha_M]\in H^2(L_M,\Ctimes).$$ 
We call a cocycle on a finite group
{\bf nondegenerate} when its twisted group algebra is simple.

We have the following orbit--stabilizer description.

\begin{theorem}\label{thm:orbit-stabilizer}
For each $L$-orbit $\mathcal O\subseteq\Irr(\M)$, choose $M\in\mathcal O$.  Then
\[
 \M^L\simeq
 \bigoplus_{\mathcal O\in L\backslash\Irr(\M)}
 \Rep(L_M,\alpha_M),
\]
as an abelian category. Consequently,
\[
 \rank(\M^L)=
 \sum_{\mathcal O\in L\backslash\Irr(\M)}
 \#\Irr\bigl(\mathbb{C}^{\alpha_M}[L_M]\bigr).
\]
In particular, $\M^L$ has rank one if and only if $L$ acts transitively on
$\Irr(\M)$ and the twisted group algebra $\mathbb{C}^{\alpha_M}[L_M]$ is simple for one,
equivalently every, $M\in\Irr(\M)$.
\end{theorem}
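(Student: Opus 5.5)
The plan is to reduce $\M^L$ to modules over a twisted action-groupoid algebra and then apply Lemma~\ref{lem:transitive-groupoid-reduction} orbit by orbit. Since $\M$ is finite semisimple, pick representatives $\{M_x\}_{x\in X}$, $X:=\Irr(\M)$; then $\M\simeq\bigoplus_{x}\Vect$ via $N\mapsto(\Hom(M_x,N))_x$. Each $U_k$ permutes $X$; write $U_k(M_x)\cong M_{kx}$. Normalization $U_1=\id$ and the isomorphism $\mu_{k,\ell}$ (together with $\Gamma_{k,\ell}$, which is the identity on underlying categories) ensure this is a genuine action of $L$ on $X$. I will choose isomorphisms $\phi_{k,x}:U_k(M_x)\to M_{kx}$, with $\phi_{1,x}=\id$. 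Transporting $\mu_{k,\ell}(M_x)$ through these choices gives scalars $\omega(k,\ell;x)\in\Ctimes$, defined by $\phi_{k,\ell x}U_k(\phi_{\ell,x})=\omega(k,\ell;x)\,\phi_{k\ell,x}\mu_{k,\ell}(M_x)$ or an equivalent convention. The pentagon axiom for $\mu$ translates into \eqref{eq:groupoid-cocycle}, and normalization gives $\omega(1,\ell;x)=\omega(k,1;x)=1$.

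Next I identify $\M^L$ with finite-dimensional modules over $\mathcal A=\mathbb C^{\omega}[L\ltimes X]$. An object $M=\bigoplus_x V_x\otimes M_x$ with $V_x$ finite-dimensional has $U_k(M)\cong\bigoplus_x V_x\otimes M_{kx}$, so an equivariant structure $v_k$ amounts to linear maps $\rho(k,x):V_x\to V_{kx}$. Condition \eqref{eq:equivariant-module-object} becomes $\rho(k,\ell x)\rho(\ell,x)=\omega(k,\ell;x)\rho(k\ell,x)$, possibly up to replacing $\omega$ by its inverse. Setting $[k,x]\mapsto\rho(k,x)$ on $V=\bigoplus_x V_x$, with $e_x$ acting as the projection onto $V_x$, this is exactly an $\mathcal A$-module structure by \eqref{eq:twisted-groupoid-product}. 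Morphisms match in the same way, so $\M^L\simeq{\rm Mod}_{\rm f.d.}(\mathcal A)$. The main bookkeeping obstacle is this step: getting the conventions for $\omega$ (and its inverse) consistent between \eqref{eq:mu-abstract} and \eqref{eq:equivariant-module-object}. The class $[\omega]$ is independent of the choices of $\phi$, because rescaling $\phi$ changes $\omega$ by a groupoid coboundary.

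Finally, $\mathcal A=\bigoplus_{\mathcal O}\mathcal A_{\mathcal O}$, where $\mathcal A_{\mathcal O}$ is spanned by arrows within the orbit $\mathcal O$, since the idempotents $\sum_{x\in\mathcal O}e_x$ are central. On each orbit the action is transitive, so Lemma~\ref{lem:transitive-groupoid-reduction} with $x_0=M$ and $H=L_M$ gives ${\rm Mod}(\mathcal A_{\mathcal O})\simeq{\rm Mod}(\mathbb C^{\alpha_M}[L_M])=\Rep(L_M,\alpha_M)$, where $\alpha_M:=\omega(\cdot,\cdot;M)|_{L_M}$. Its class is the stabilizer class, and a different choice of $M$ in the orbit yields a conjugate stabilizer with corresponding cocycle, by the matrix isomorphism \eqref{eq:transitive-groupoid-matrix-reduction}. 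This gives the decomposition. The rank formula follows by counting simple modules, since each $\mathbb C^{\alpha_M}[L_M]$ is semisimple by Maschke's theorem. The rank equals one exactly when there is a single orbit, i.e. the action is transitive, and that orbit contributes exactly one simple module, i.e. $\mathbb C^{\alpha_M}[L_M]$ is simple. Independence of the base point $M$ follows from the same corner argument, since all $e_x$ in one orbit are full idempotents of the same algebra.
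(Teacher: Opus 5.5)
Your proposal is correct and is essentially the paper's proof. Both choose isomorphisms $U_k(M_x)\cong M_{kx}$, extract a normalized groupoid $2$-cocycle $\omega$ from $\mu$ via its pentagon, identify equivariant objects supported on an orbit with modules over $\mathbb C^{\omega}[L\ltimes\mathcal O]$, and pass to the stabilizer corner by Lemma~\ref{lem:transitive-groupoid-reduction}. The differences are only cosmetic: you split the whole algebra by central orbit idempotents instead of fixing an orbit first, and the paper settles your convention worry by putting $\omega(k,\ell;x)^{-1}$ in \eqref{eq:groupoid-cocycle-from-equivariance}, which makes the relation for $\rho$ match \eqref{eq:twisted-groupoid-product} exactly.
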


\begin{proof}
Fix an orbit $\mathcal O$, and choose a representative simple object $M_x$ for
each $x\in\mathcal O$.  Choose isomorphisms
\[
 c_{k,x}:U_k(M_x)\xrightarrow{\cong}M_{kx}.
\]
Since every $M_x$ is simple, comparison of the two isomorphisms from
$U_kU_\ell(M_x)$ to $M_{k\ell x}$ gives a unique scalar
$\omega(k,\ell;x)\in\Ctimes$, which we normalize by
\begin{equation}\label{eq:groupoid-cocycle-from-equivariance}
 c_{k,\ell x}\,U_k(c_{\ell,x})
 =\omega(k,\ell;x)^{-1}
   c_{k\ell,x}\,\mu_{k,\ell}(M_x).
\end{equation}
The pentagon for $\mu$ says precisely that $\omega$ satisfies
\eqref{eq:groupoid-cocycle}; thus $\omega$ is a normalized $2$-cocycle on the
action groupoid $L\ltimes\mathcal O$ of
\S\ref{sec:twisted-action-groupoid}.

Every object of $\M$ supported on $\mathcal O$ has the form
\[
 N=\bigoplus_{x\in\mathcal O}V_x\otimes M_x.
\]
Under the chosen maps $c_{k,x}$, an equivariant structure on $N$ is equivalent
to a family of linear maps
\[
 \rho(k,x):V_x\longrightarrow V_{kx}
\]
satisfying
\begin{equation}\label{eq:groupoid-representation-relation}
 \rho(k,\ell x)\rho(\ell,x)
 =\omega(k,\ell;x)\rho(k\ell,x).
\end{equation}
Relation~\eqref{eq:groupoid-representation-relation} says exactly that the arrow
$[k,x]$ acts from $V_x$ to $V_{kx}$, and that these arrow actions define a module
over $\mathbb C^{\omega}[L\ltimes\mathcal O]$.  Hence the full subcategory of
$\M^L$ supported on $\mathcal O$ is equivalent to
\[
 {\rm Mod}(\mathbb C^{\omega}[L\ltimes\mathcal O]).
\]

Choose $M=M_{x_0}$ in the orbit $\mathcal O$.  The isotropy group at $x_0$ is $L_M$, and the
restriction of $\omega$ to its loops represents the class $[\alpha_M]$.  By
Lemma~\ref{lem:transitive-groupoid-reduction},
\[
 {\rm Mod}(\mathbb C^{\omega}[L\ltimes\mathcal O])
 \simeq
 {\rm Mod}(\mathbb C^{\alpha_M}[L_M])
 =\Rep(L_M,\alpha_M).
\]
Taking the direct sum over the orbits proves the categorical decomposition.  The
rank formula follows by counting the simple modules of the twisted stabilizer
algebras, and the rank is one exactly under the two conditions stated in the
theorem.
\end{proof}

\begin{remark}\label{rem:no-rank-inequality}
There is no general inequality comparing $\rank(\M^L)$ and $\rank(\M)$.  For
example, if $\mathbb{Z}/2\mathbb{Z}$ interchanges the two simple summands of $\Vect\oplus\Vect$, then
$(\Vect\oplus\Vect)^{\mathbb{Z}/2\mathbb{Z}}\simeq\Vect$. \qed
\end{remark}

\subsection{Module categories over an equivariantization}\label{sec:module-classification-general}

Let $\C\rtimes K$ be the crossed-product tensor category (see, e.g., \cite[Definition 4.15.5]{EGNO}).  Recall that the category $\C$ is a
left $\C\rtimes K$-module category, and the standard duality construction gives a tensor equivalence
\[
 (\C\rtimes K)^*_{\C}=\Fun_{\C\rtimes K}(\C,\C)\simeq\C^K.
\]
Explicitly, an equivariant object $(X,u)$ acts on $\C$ by the module endofunctor
$X\otimes-$; the equivariant structure supplies the module constraints for the
homogeneous components of $\C\rtimes K$.  Evaluation at the tensor unit gives the
inverse construction. (For the fusion case see, e.g., \cite[Example 7.12.25]{EGNO}.)

\begin{proposition}\label{prop:equivariantization-modules}
Let $T$ be a complex torus, let $\C:=\Rep(T)$, and let a finite group $K$ act
on $\C$. Then every finite indecomposable semisimple $\C^K$-module category is
equivalent to $\M^L$ for some subgroup $L\subseteq K$ and some indecomposable
$L$-equivariant $\C$-module category $\M$.

Moreover, two such pairs determine equivalent $\C^K$-module categories precisely when they
are related by conjugation in $K$ and an equivariant $\C$-module equivalence.
\end{proposition}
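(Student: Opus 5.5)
The plan is to transport module categories through the duality $(\C\rtimes K)^*_{\C}\simeq\C^K$ recalled above, and then to classify module categories over the crossed product $\C\rtimes K$, where the answer is the standard one for group extensions.

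\textbf{Step 1 (duality).} Let $\mathcal N$ be a finite indecomposable semisimple $\C^K$-module category. I will form the dual module category $\mathcal N':=\Fun_{\C^K}(\C,\mathcal N)$ over $(\C^K)^*_{\C}\simeq\C\rtimes K$. Double duality should give $\mathcal N\simeq\Fun_{\C\rtimes K}(\C,\mathcal N')$, and $\mathcal N'$ should again be indecomposable. Since $\C=\Rep(T)$ is semisimple but not finite, I cannot quote the fusion-category result directly. My first approach is to work with $\C=\bigoplus_{\chi\in\widehat T}\Vect$, which is pointed with simple objects the characters. In this setting $\C\rtimes K$ is the category of vector spaces graded by the group $\widehat T\rtimes K$, twisted by a $3$-cocycle that encodes the extension class; call this $\Vect^\omega_{\widehat T\rtimes K}$. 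Module categories over such a category are locally finite, so the duality arguments of \cite[Ch.~7]{EGNO} should go through by working one finite graded piece at a time.

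\textbf{Step 2 (classification over the crossed product).} Every indecomposable semisimple $\C\rtimes K$-module category $\mathcal N'$ restricts to a $\C$-module category. Its simple objects form one orbit under the action of $\widehat T\rtimes K$, so the $K$-grading acts transitively on the $\C$-indecomposable summands. I will pick one summand $\M$ and let $L\subseteq K$ be the subgroup of degrees that preserve $\M$. The action of the homogeneous components $\mathbf 1\boxtimes k$, for $k\in L$, then supplies module equivalences $U_k:\M^k\to\M$ together with isomorphisms $\mu_{k,\ell}$. The associativity constraints of $\mathcal N'$ force exactly the pentagon conditions, so $\M$ becomes an $L$-equivariant $\C$-module category. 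Conversely, I will show that $\mathcal N'\simeq\Ind_{\C\rtimes L}^{\C\rtimes K}\M$. Finally I need to compute $\Fun_{\C\rtimes K}(\C,\Ind\M)\simeq\Fun_{\C\rtimes L}(\C,\M)$ and identify the latter with $\M^L$. A $\C\rtimes L$-module functor from $\C$ is determined by the image of $\mathbf 1$, which is an object $M\in\M$ together with isomorphisms $U_k(M)\cong M$ satisfying \eqref{eq:equivariant-module-object}. Finiteness of $\mathcal N$ forces $\M$ to be finite, because Theorem~\ref{thm:orbit-stabilizer} shows that the rank of $\M^L$ is at least the number of $L$-orbits on $\Irr(\M)$.

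\textbf{Step 3 (uniqueness).} Equivalences $\mathcal N_1\simeq\mathcal N_2$ correspond under duality to equivalences of $\C\rtimes K$-module categories. An equivalence of induced module categories sends the chosen summand $\M_1$ to some summand $\M_2^{k}$. Choosing the summand differently conjugates $L$ by $k$ and transports the equivariant structure along $\Gamma$. After this conjugation, the equivalence restricts to a $\C$-module equivalence that intertwines the $U$ and $\mu$ data. The converse direction is a direct check.

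The main obstacle will be Step 1, the infinite (non-fusion) setting. The double-centralizer and indecomposability statements need $\C$ to be fusion, and I must justify them for $\Rep(T)$. My plan is to use the $\widehat T$-grading to reduce to the finitely many simple objects relevant to a finite $\mathcal N$. The simple objects of $\M$ are finitely many, and the stabilizer of one of them in $\widehat T$ is a finite-index lattice $\Lambda$. I then expect everything to factor through the fusion category of vector spaces graded by the finite group $(\widehat T/\Lambda)\rtimes K$.
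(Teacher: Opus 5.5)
Your Steps 1--3 are the paper's argument. The paper also uses the duality between $\C^K$ and $\C\rtimes K$, lets the objects $\mathbf 1\boxtimes k$ permute the $\C$-indecomposable summands transitively, takes the stabilizer $L$ with its induced equivariant structure, induces back, and identifies $\Fun_{\C\rtimes L}(\C,\M)\simeq\M^L$. Like you, it defers the details to ENO, Proposition~5.4.

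The problem is the one place where you go beyond the paper: your plan for the non-fusion setting would not work. A finite $\C$-module category $\M(S,\Psi)$ carries a class in $H^2(S,\Ctimes)$ (Proposition~\ref{prop:module-cats-torus}). For the toral cocycles this paper is about, e.g.\ $\psi((a,b),(c,d))=q^{ad}$ on $\mathbb Z^2$ with $q$ not a root of unity, that class restricts nontrivially to every finite-index sublattice. So $\Lambda$ does not act trivially, and nothing factors through the projection to $\Vect_{(\widehat T/\Lambda)\rtimes K}$.

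Putting a tensor structure $\beta$ on the projection does not save the idea. In a pullback, every simple object carries the same class $[\beta|_\Lambda]$ on $\Lambda$, whereas in $\Ind\M$ the classes on different summands are $K$-conjugate. Take $G=(\Gm)^2\rtimes\mathbb Z/2\mathbb Z$ with $K$ swapping the factors and $\M=\M(\mathbb Z^2,q^{ad})$, so $L=1$. Then the two summands have alternating forms $R$ and $R^{-1}$, which differ on every finite-index sublattice.

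Even if $\mathcal N'$ did factor, the bimodule $\C$ implementing the duality and $\C^K=\Rep(G)$ itself do not. So the fusion double-centralizer theorem would still not apply. There is also a circularity: you get finiteness of $\M$ from $\mathcal N\simeq\M^L$, which Step 1 is supposed to supply, and then use that finiteness to justify Step 1.

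What is actually needed is only that every category in the argument is finite, and this can be seen directly. As a $\Rep(G)$-module category, $\C=\Rep(T)$ is the category of modules over the finite-dimensional semisimple commutative algebra $A=\mathscr O(G/T)\cong\mathscr O(K)$ in $\Rep(G)$. Hence $\mathcal N'=\Fun_{\C^K}(\C,\mathcal N)$ is the category of $A$-modules in $\mathcal N$, which is finite semisimple whenever $\mathcal N$ is. The reconstruction $\mathcal N\simeq(\mathcal N')^K$ is de-equivariantization for the finite group $K$, and it involves only $K$ and the finite-length object $A$. This gives finiteness of $\mathcal N'$, and hence of $\M$, before any identification with $\M^L$. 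After that, your Steps 2 and 3 go through as written. This is the precise content of the paper's remark that no infinite direct sums occur.
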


\begin{proof}
By the preceding remarks, the usual evaluation and coevaluation functors 
identify finite semisimple module categories over the Morita-equivalent tensor
categories $\C^K$ and $\C\rtimes K$. (Note that no infinite direct sums occur since $K$ is finite and every module category
under consideration has finitely many simple objects.)

Thus, it remains to describe finite indecomposable module categories over
$\C\rtimes K$.  After restriction to the neutral component $\C$, such a module
category decomposes into finitely many indecomposable $\C$-module summands.  The
invertible homogeneous objects $\mathbf 1\boxtimes k$, $k\in K$, permute these summands.
Indecomposability over $\C\rtimes K$ implies that the permutation action is
transitive.  If $\M$ is one summand and $L$ is its stabilizer, the homogeneous
action and its associativity constraints give an $L$-equivariant structure on
$\M$.  Conversely, induction from this stabilizer reconstructs the
$\C\rtimes K$-module category, and under the preceding Morita equivalence it
corresponds to $\M^L$. (This is the argument of \cite[Proposition 5.4]{ENO}; the
finiteness observations above justify it in the present locally finite setting.)
\end{proof}

\section{The reductive case}\label{sec:reductive}

From now on, let $G$ be an affine algebraic reductive group whose identity
component is a torus $T$, and let $K:=G/T$.

\subsection{\texorpdfstring{The tensor equivalence $\Rep(G)\simeq\Rep(T)^K$}{The tensor equivalence for Rep(G)}}
\label{sec:tensor-equivalence}

Consider the exact sequence
\begin{equation}\label{eq:extension}
 1\rightarrow T\rightarrow G\xrightarrow{\pi}K\rightarrow 1.
\end{equation}
Choose a set-theoretic section $\sigma:K\xrightarrow{1:1} G$ with $\sigma(1)=1$.  Define
\[
 \theta_k(t):=\sigma(k)t\sigma(k)^{-1},
 \qquad
 \eta(k,\ell):=\sigma(k)\sigma(\ell)\sigma(k\ell)^{-1}
\]
for all $k,\ell\in K$ and $t\in T$.

\begin{lemma}\label{lem:extension-data}
The maps $\theta$ and $\eta$ have the following properties:
\begin{enumerate}
\item $\theta:K\to\Aut(T)$ is an action, independent of the chosen section.
\item $\eta\in Z^2(K,T)$ for this action; explicitly,
\[
 \theta_k(\eta(\ell,m))\eta(k,\ell m)
 =\eta(k,\ell)\eta(k\ell,m)
\]
for all $k,\ell,m\in K$. Its cohomology class is independent of $\sigma$.
\item 
If $N:=\{t\in T\mid t^{|K|}=1\}$, then $[\eta]$ lies in the image of
\[
 H^2(K,N)\longrightarrow H^2(K,T).
\]
After changing $\sigma$ by a $T$-valued $1$-cochain, one may therefore choose an
$N$-valued representative of the extension class.
\item 
The map $T\times K\to G$, $(t,k)\mapsto t\sigma(k)$, is an isomorphism of
varieties, and the multiplication is
\[
 (t,k)(s,\ell)=\bigl(t\theta_k(s)\eta(k,\ell),k\ell\bigr).
\]
\end{enumerate}
\end{lemma}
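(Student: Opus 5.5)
The plan is to verify the four items directly from the definitions, using only that $T$ is a commutative normal subgroup of $G$ and that $K$ is finite. For (1), $\theta_k$ is conjugation by $\sigma(k)$ restricted to the normal subgroup $T$, hence an algebraic automorphism of $T$. Any other lift of $k$ has the form $t\sigma(k)$ with $t\in T$, and since $T$ is commutative, conjugation by $t$ is trivial on $T$. So $\theta_k$ depends only on $k$. It follows that $\theta_k\theta_\ell$ is conjugation by $\sigma(k)\sigma(\ell)=\eta(k,\ell)\sigma(k\ell)$, which equals $\theta_{k\ell}$, and $\theta_1=\id$ because $\sigma(1)=1$. Note that $\eta(k,\ell)\in T$ because $\pi$ kills it.

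For (2), I would expand $\sigma(k)\sigma(\ell)\sigma(m)$ in the two ways allowed by associativity. Grouping as $(\sigma(k)\sigma(\ell))\sigma(m)$ gives
\[
\eta(k,\ell)\sigma(k\ell)\sigma(m)=\eta(k,\ell)\eta(k\ell,m)\sigma(k\ell m).
\]
Grouping as $\sigma(k)(\sigma(\ell)\sigma(m))$ gives
\[
\sigma(k)\eta(\ell,m)\sigma(\ell m)=\theta_k(\eta(\ell,m))\eta(k,\ell m)\sigma(k\ell m).
\]
Comparing the two yields the cocycle identity. Normalization comes from $\sigma(1)=1$. If the section is changed to $\sigma'(k)=f(k)\sigma(k)$ with $f:K\to T$ and $f(1)=1$, the same computation gives $\eta'=\eta\cdot\partial f$, so the class of $\eta$ is unchanged.

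For (3), I would use the Kummer-type short exact sequence of $K$-modules $1\to N\to T\xrightarrow{n} T\to 1$, where $n=|K|$. The map $t\mapsto t^n$ is surjective on a complex torus and commutes with each $\theta_k$, and $N$ is $\theta$-stable. Since $H^2(K,T)$ is killed by $|K|$, the long exact sequence
\[
H^2(K,N)\to H^2(K,T)\xrightarrow{n_*}H^2(K,T)
\]
has $n_*=0$ on the middle term, so $H^2(K,N)\to H^2(K,T)$ is surjective. Concretely, $\eta^{n}=\partial g$ for some cochain $g$, and since $T$ is divisible one can choose $f$ with $f^{n}=g$, normalized. Then $\eta\,\partial f^{-1}$ is $N$-valued, and replacing $\sigma$ by $f^{-1}\sigma$ realizes this representative. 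I expect this to be the only step needing care: one has to choose the $n$-th root with $f(1)=1$, and check that the change of section indeed acts by the coboundary, which follows from the formula $\eta'=\eta\cdot\partial f$ obtained in (2).

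For (4), every $g\in G$ can be written uniquely as $g=t\sigma(\pi(g))$ with $t=g\sigma(\pi(g))^{-1}\in T$. Since $K$ is finite, $G$ is the disjoint union of the cosets $T\sigma(k)$, each of which is open and closed. On each such component the maps $t\mapsto t\sigma(k)$ and $g\mapsto g\sigma(k)^{-1}$ are morphisms of varieties, so $(t,k)\mapsto t\sigma(k)$ is an isomorphism of varieties $T\times K\to G$. Finally, the multiplication formula follows from
\[
t\sigma(k)s\sigma(\ell)=t\theta_k(s)\sigma(k)\sigma(\ell)=t\theta_k(s)\eta(k,\ell)\sigma(k\ell).
\]
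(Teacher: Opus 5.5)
Your proposal is correct and takes the same route as the paper. Items (1), (2) and (4) come from associativity in $G$ and commutativity of $T$, and item (3) comes from the long exact sequence of $1\to N\to T\xrightarrow{t\mapsto t^{|K|}}T\to 1$, using that $|K|$ annihilates $H^2(K,T)$; you additionally make explicit the change-of-section formula $\eta'=\eta\cdot\partial f$ and the choice of a normalized $|K|$-th root, which the paper leaves implicit.
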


\begin{proof}
(1), (2), and (4) follow directly from associativity in $G$
and the commutativity of $T$.  

(3) Use the exact sequence of
$K$-modules
\[
 1\longrightarrow N\longrightarrow T
 \xrightarrow{t\mapsto t^{|K|}}T\longrightarrow1.
\]
The power map is surjective because $T$ is a complex torus, while multiplication by
$|K|$ annihilates $H^2(K,T)$.  Thus the long exact cohomology sequence gives the claim.
\end{proof}

Let $\widehat{T}$ be the {\bf group of characters} of $T$. Recall that $\Rep(T)$ is a semisimple pointed tensor category with simple objects 
$$\{\delta_x\mid x\in \widehat{T}\}.$$
For $x\in \widehat{T}$ and $k\in K$, define
\[
 x^k:=x\circ\theta_{k^{-1}}.
\]
For $X\in\Rep(T)$, let $A_k(X)$ be the pullback of $X$ along $\theta_{k^{-1}}$.
Thus $A_k(\delta_x)=\delta_{x^k}$.  Define a tensor natural isomorphism
\begin{equation}\label{eq:gamma-extension}
 \gamma_{k,\ell}(X)
 :=\rho_X\!\left(\theta_{(k\ell)^{-1}}(\eta(k,\ell))\right)
 :A_kA_\ell(X)\xrightarrow{\cong}A_{k\ell}(X).
\end{equation}
The cocycle identity for $\eta$ is exactly the pentagon identity for $\gamma$, so
$(A,\gamma)$ is an action of $K$ on $\Rep(T)$.

\begin{theorem}\label{thm:tensor-equivalence}
The functor
\[
 \Phi:\Rep(T)^K\longrightarrow\Rep(G),
 \qquad
 (X,u)\longmapsto\bigl[t\sigma(k)\mapsto\rho_X(t)u_k\bigr],
\]
is a tensor equivalence.
\end{theorem}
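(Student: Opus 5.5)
The plan is to check directly that $\Phi$ is well defined, fully faithful, essentially surjective and monoidal, using the coordinates $G\cong T\times K$ of Lemma~\ref{lem:extension-data}(4).

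\emph{Step 1: $\Phi(X,u)$ is a rational representation.} Write $\rho(t\sigma(k)):=\rho_X(t)u_k$. Regularity is immediate, since $G$ is a disjoint union of the cosets $T\sigma(k)$ and on each coset $\rho$ is $\rho_X$ composed with a fixed invertible map. For multiplicativity, use $t\sigma(k)s\sigma(\ell)=t\theta_k(s)\eta(k,\ell)\sigma(k\ell)$. We must show
\[
\rho_X(t)u_k\rho_X(s)u_\ell=\rho_X\bigl(t\theta_k(s)\eta(k,\ell)\bigr)u_{k\ell}.
\]
Two ingredients give this.
\begin{itemize}
\item[(a)] Since $u_k:A_k(X)\to X$ is $T$-linear and $A_k(X)$ is the pullback along $\theta_{k^{-1}}$, we have $u_k\rho_X(\theta_{k^{-1}}(s'))=\rho_X(s')u_k$. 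Taking $s'=\theta_k(s)$ gives $u_k\rho_X(s)=\rho_X(\theta_k(s))u_k$.
\item[(b)] The equivariance condition \eqref{eq:equivariant-object} reads $u_kA_k(u_\ell)=u_{k\ell}\gamma_{k,\ell}(X)$. On underlying spaces $A_k(u_\ell)=u_\ell$, so by \eqref{eq:gamma-extension} this becomes $u_ku_\ell=u_{k\ell}\rho_X\bigl(\theta_{(k\ell)^{-1}}(\eta(k,\ell))\bigr)$. Applying (a) for $k\ell$ rewrites the right side as $\rho_X(\eta(k,\ell))u_{k\ell}$.
\end{itemize}
Combining (a) and (b) yields exactly the required identity. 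I expect this step to be the main obstacle. The difficulty is not conceptual but one of conventions: one must confirm that the exponent $\theta_{(k\ell)^{-1}}$ in \eqref{eq:gamma-extension} and the variance of $A_k$ match correctly. If a sign or inverse is off, the fix is to adjust which side $\eta$ is placed on.

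\emph{Step 2: $\Phi$ on morphisms.} A morphism $f$ in $\Rep(T)^K$ is $T$-linear and commutes with the $u_k$. It therefore commutes with all of $\rho(G)$, and the converse holds as well. So $\Phi$ is fully faithful, since it is the identity on Hom spaces.

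\emph{Step 3: essential surjectivity.} Given a representation $V$ of $G$, set $X=V|_T$ and $u_k:=\rho_V(\sigma(k))$. The relation $\sigma(k)s=\theta_k(s)\sigma(k)$ shows that $u_k:A_k(X)\to X$ is $T$-linear. The relation $\sigma(k)\sigma(\ell)=\eta(k,\ell)\sigma(k\ell)$, read backwards through Step 1, gives \eqref{eq:equivariant-object}. Then $\Phi(X,u)=V$ on the nose, since $\rho_V(t\sigma(k))=\rho_V(t)\rho_V(\sigma(k))$.

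\emph{Step 4: monoidal structure.} The tensor product in $\Rep(T)^K$ is $(X\otimes Y,\,(u_k\otimes v_k)\circ J)$, where $J$ is the tensor structure of $A_k$. Here $J$ is the identity on underlying spaces, because pullback is strict monoidal. Hence $\Phi$ is strictly monoidal with identity constraints, and its unit is the trivial representation. Together with Steps 2 and 3, this shows that $\Phi$ is a tensor equivalence.
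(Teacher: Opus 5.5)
Your proposal is correct and follows essentially the same route as the paper. Both arguments derive $u_k\rho_X(s)=\rho_X(\theta_k(s))u_k$ and $u_ku_\ell=\rho_X(\eta(k,\ell))u_{k\ell}$ and combine them with Lemma~\ref{lem:extension-data}(4), then invert the construction via $u_k:=\pi(\sigma(k))$; your conventions (the $\theta_{(k\ell)^{-1}}$ in $\gamma$ and the variance of $A_k$) check out as written, and your Steps 2 and 4 simply spell out the full faithfulness and strict monoidality that the paper calls ``straightforward.''
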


\begin{proof}
Because $u_k:A_k(X)\xrightarrow{\cong} X$ is $T$-linear,
\[
 u_k\rho_X(s)=\rho_X(\theta_k(s))u_k.
\]
Moreover, \eqref{eq:equivariant-object} and \eqref{eq:gamma-extension} give
\[
 u_ku_\ell=\rho_X(\eta(k,\ell))u_{k\ell}.
\]
Together with Lemma~\ref{lem:extension-data}(4), these identities show that the
displayed formula defines a representation of $G$.

Conversely, if $(X,\pi)$ is a $G$-representation, restrict it to $T$ and set
$u_k:=\pi(\sigma(k))$.  Then $u_k:A_k(X)\to X$ is $T$-linear, and
\[
 u_ku_\ell=\pi(\eta(k,\ell))u_{k\ell}
 =u_{k\ell}\gamma_{k,\ell}(X).
\]
Thus $(X|_T,u)$ is a $K$-equivariant object.  

Finally, it is straightforward to verify that the two constructions above are mutually
inverse and preserve tensor products.
\end{proof}

For completeness, we record the corresponding description of irreducible
representations of $G$.  For $x\in \widehat{T}$, let
\[
 K_x:=\{k\in K\mid x^k=x\},
 \qquad
 \alpha_x(k,\ell)=x(\eta(k,\ell)),\quad k,\ell\in K_x.
\]
The restriction of $\eta$ to $K_x$ evaluates to the scalar cocycle $\alpha_x$.

\begin{proposition}\label{prop:simple-G-modules}
Isomorphism classes of simple modules over $G$ are parametrized by $K$-orbits in $\widehat{T}$
together with simple $\alpha_x$-projective representations of $K_x$.  If $W$ is
such a projective representation, the corresponding simple $G$-module is
\[
 \Ind_{G_x}^{G}(W\otimes\delta_x),
 \qquad G_x=\pi^{-1}(K_x),
\]
where $t\sigma(k)\in G_x$ acts on $W\otimes\delta_x$ by
$x(t)\rho_W(k)$.
\end{proposition}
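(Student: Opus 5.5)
The plan is to deduce Proposition~\ref{prop:simple-G-modules} from Theorem~\ref{thm:tensor-equivalence} together with the orbit--stabilizer description of Theorem~\ref{thm:orbit-stabilizer}. The point is that $\Rep(T)^K$ is exactly the equivariantization of the finite-rank pieces of $\Rep(T)$. Concretely, for each $K$-orbit $\mathcal O=K\cdot x\subseteq\widehat T$, consider the full subcategory $\Rep(T)_{\mathcal O}$ of $T$-modules whose weights lie in $\mathcal O$. It is a finite semisimple category with simple objects $\delta_y$, $y\in\mathcal O$. It is stable under every $A_k$, since $A_k(\delta_y)=\delta_{y^k}$, and the isomorphisms $\gamma$ of \eqref{eq:gamma-extension} restrict to it. Equivariant objects decompose by orbits, so $\Rep(T)^K\simeq\bigoplus_{\mathcal O}(\Rep(T)_{\mathcal O})^K$. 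Each summand is a $K$-equivariant category in the sense of \S\ref{sec:orbit-stabilizer}, with $U_k=A_k$, $\mu=\gamma$ and $L=K$. The $K$-action on $\mathcal O$ is transitive, and the stabilizer of $\delta_x$ is $K_x$.

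The first step is to compute the stabilizer cocycle $\alpha_M$ at $M=\delta_x$. Since $A_k(\delta_x)$ is literally $\delta_x$ as a vector space for $k\in K_x$, I choose $c_{k,x}=\id$. Equation \eqref{eq:groupoid-cocycle-from-equivariance} then reduces to
\[
 \omega(k,\ell;x)^{-1}=\gamma_{k,\ell}(\delta_x)
 = x\bigl(\theta_{(k\ell)^{-1}}(\eta(k,\ell))\bigr)
 = x^{k\ell}(\eta(k,\ell)) = x(\eta(k,\ell)),
\]
where the last equality uses $k\ell\in K_x$. So the loop cocycle is $\alpha_x$, up to the inversion convention. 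Here I must check carefully whether $\Rep(K_x,\alpha_x)$ or $\Rep(K_x,\alpha_x^{-1})$ appears. The cleanest check is direct: for a $K_x$-equivariant structure on $W\otimes\delta_x$, the relation $u_ku_\ell=\rho_X(\eta(k,\ell))u_{k\ell}$ from the proof of Theorem~\ref{thm:tensor-equivalence} gives $\rho_W(k)\rho_W(\ell)=x(\eta(k,\ell))\rho_W(k\ell)$, which is the convention $\rho(k)\rho(\ell)=\alpha(k,\ell)\rho(k\ell)$ of \S\ref{sec:equivariant-modules}. Theorem~\ref{thm:orbit-stabilizer} then gives a bijection between simple objects in the $\mathcal O$-summand and simple $\alpha_x$-projective representations of $K_x$. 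Transporting along $\Phi$ yields the parametrization.

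The second step identifies the simple $G$-module explicitly. The group $G_x=\pi^{-1}(K_x)$ consists of the elements $t\sigma(k)$ with $k\in K_x$. Using the multiplication law of Lemma~\ref{lem:extension-data}(4), $t\sigma(k)\mapsto x(t)\rho_W(k)$ is a representation. The check is
\[
 x(t)\rho_W(k)\,x(s)\rho_W(\ell)=x\bigl(t\theta_k(s)\eta(k,\ell)\bigr)\rho_W(k\ell),
\]
which needs $x(\theta_k(s))=x(s)$ for $k\in K_x$ together with the projective relation above. Then $\Ind_{G_x}^G(W\otimes\delta_x)$, of dimension $[K:K_x]\dim W$, restricts to $T$ as $\bigoplus_{y\in\mathcal O}W\otimes\delta_y$. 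This is exactly the object built in the proof of Lemma~\ref{lem:transitive-groupoid-reduction} via the induction functor $W\mapsto\mathcal Ae_{x_0}\otimes W$: choosing coset representatives $r_y$ with $x^{r_y}=y$ matches $\sigma(r_y)\otimes(W\otimes\delta_x)$ with the arrows $p_y$. Alternatively, simplicity can be checked by Mackey/Frobenius reciprocity. $\Hom_G(\Ind W,\Ind W)\cong\Hom_{G_x}(W\otimes\delta_x,\Ind W|_{G_x})$, and the $x$-weight space of $\Ind W$ is precisely the summand $W\otimes\delta_x$, so this is $\End_{K_x,\alpha_x}(W)=\mathbb C$.

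I expect the main obstacle to be purely bookkeeping. One part is the inverse conventions: $x^k=x\circ\theta_{k^{-1}}$, and $\gamma$ involves $\theta_{(k\ell)^{-1}}$. The other is making sure that the correspondence of Theorem~\ref{thm:orbit-stabilizer} really matches the explicit induced module, rather than one twisted by $\alpha_x^{-1}$ or by a different choice of section. To settle this, I would rely on the direct Frobenius reciprocity argument, which avoids the groupoid normalization altogether. Independence of the choice of $x$ in the orbit, and of $\sigma$, follows from conjugation by $\sigma(k)$, which maps $G_x$ to $G_{x^k}$ and $\alpha_x$ to a cohomologous cocycle.
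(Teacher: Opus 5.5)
Your proposal is correct and follows the paper's proof. The paper likewise checks that $t\sigma(k)\mapsto x(t)\rho_W(k)$ is a representation of $G_x$, using $K_x$-invariance of $x$ and the $\alpha_x$-projective relation, and then invokes Theorem~\ref{thm:tensor-equivalence} together with the orbit--stabilizer description; your splitting of $\Rep(T)^K$ into the finite orbit blocks $(\Rep(T)_{\mathcal O})^K$ (needed because Theorem~\ref{thm:orbit-stabilizer} is stated for finite categories) and your Frobenius-reciprocity check of simplicity are useful details the paper leaves implicit. One small slip: with $c_{k,x}=\id$, equation~\eqref{eq:groupoid-cocycle-from-equivariance} gives $\omega(k,\ell;x)=\gamma_{k,\ell}(\delta_x)=\alpha_x(k,\ell)$ rather than its inverse; this agrees with your direct check via $u_ku_\ell=\rho_X(\eta(k,\ell))u_{k\ell}$, so nothing downstream is affected.
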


\begin{proof}
The formula defines a representation of $G_x$ because $x$ is $K_x$-invariant and
$\rho_W(k)\rho_W(\ell)=\alpha_x(k,\ell)\rho_W(k\ell)$.  The assertion now follows
from Theorem~\ref{thm:tensor-equivalence} and the orbit--stabilizer description of
simple objects in an equivariantization.
\end{proof}

\subsection{\texorpdfstring{Finite module categories over $\Rep(T)$}{Finite module categories over Rep(T)}}
\label{sec:module-cats-torus}

Let $S\subseteq \widehat{T}$ be a subgroup of {\bf finite} index.  A normalized
{\bf module $2$-cocycle} on the transitive $\widehat{T}$-set $\widehat{T}/S$ is a function
\[
 \Psi:\widehat{T}\times \widehat{T}\times (\widehat{T}/S)\longrightarrow\Ctimes,
 \qquad (x,z,y)\longmapsto\Psi(x,z;y),
\]
satisfying
\begin{equation}\label{eq:module-cocycle}
 \Psi(x,z;wy)\Psi(xz,w;y)
 =\Psi(z,w;y)\Psi(x,zw;y)
\end{equation}
for all $x,z,w\in \widehat{T}$ and $y\in \widehat{T}/S$, together with
$\Psi(1,x;y)=\Psi(x,1;y)=1$.

Define $\M(S,\Psi)$ to be the finite semisimple category with simple objects
$$\{\delta_y\mid y\in \widehat{T}/S\},$$
action
\[
 \delta_x\otimes\delta_y=\delta_{xy},
\]
and module associativity constraint
\[
 m_{x,z,y}=\Psi(x,z;y)\id_{\delta_{xzy}}.
\]
Equation \eqref{eq:module-cocycle} is precisely the module pentagon.

\begin{proposition}[\cite{G2}; see also \cite{EGNO}]\label{prop:module-cats-torus}
Every finite indecomposable semisimple $\Rep(T)$-module category is equivalent to
$\M(S,\Psi)$ for some finite-index subgroup $S\subseteq \widehat{T}$ and some normalized module
$2$-cocycle $\Psi$.  Its equivalence class depends only on the cohomology class of
$\Psi$.  By Shapiro's lemma, these cohomology classes are naturally identified with
$H^2(S,\Ctimes)$.
\end{proposition}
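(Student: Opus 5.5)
The approach is to identify $\Rep(T)$ with the pointed category $\Vect_{\widehat T}$, whose simple objects are the $\delta_x$ with trivial associator. I would then follow the standard classification of module categories over pointed categories, as in \cite{EGNO}, taking care only with the finiteness hypothesis.

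First, let $\M$ be a finite indecomposable semisimple $\Rep(T)$-module category. Each $\delta_x$ is invertible, so $\delta_x\otimes-$ permutes the finite set $\Irr(\M)$. This gives an action of $\widehat T$ on $\Irr(\M)$. Indecomposability forces the action to be transitive: the span of an orbit is a module subcategory, and semisimplicity makes it a direct summand. I fix $M_0\in\Irr(\M)$ and let $S$ be its stabilizer. Then $\Irr(\M)\cong \widehat T/S$ as $\widehat T$-sets, and $S$ has finite index because $\Irr(\M)$ is finite.

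Second, I choose a representative simple object $M_y$ for each $y\in\widehat T/S$, together with isomorphisms $c_{x,y}:\delta_x\otimes M_y\to M_{xy}$, normalized by $c_{1,y}=\id$. Each $M_y$ is simple with $\End(M_y)=\mathbb C$, so transporting the module associativity constraint through the $c$'s gives scalars $\Psi(x,z;y)$. The module pentagon becomes exactly \eqref{eq:module-cocycle}, and the unit constraints give normalization. Hence $\M\simeq\M(S,\Psi)$.

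Changing the $c_{x,y}$ by scalars $f(x;y)$ multiplies $\Psi$ by a groupoid coboundary, so the equivalence class depends only on the class of $\Psi$ in the $2$-cohomology of the action groupoid $\widehat T\ltimes(\widehat T/S)$. Conversely, an equivalence $\M(S,\Psi)\to\M(S,\Psi')$ of module categories that fixes the base object induces such a coboundary. If it moves the base object, it acts by translation by some element of $\widehat T$, and since $\widehat T$ is abelian this is harmless.

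Finally, the Shapiro step. The groupoid $\widehat T\ltimes(\widehat T/S)$ is transitive with isotropy group $S$, so it is equivalent to the one-object groupoid $S$; this is the groupoid analogue of Lemma~\ref{lem:transitive-groupoid-reduction}. Restricting to loops at the base point gives an isomorphism
\[
H^2(\widehat T\ltimes\widehat T/S,\Ctimes)\cong H^2(\widehat T,\operatorname{Coind}_S^{\widehat T}\Ctimes)\cong H^2(S,\Ctimes).
\]
The inverse uses the section $r_y$ to extend an $S$-cocycle. I expect the main obstacle to be the careful bookkeeping here: checking that cohomologous groupoid cocycles correspond to equivalent module categories and vice versa. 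Once the groupoid is reduced to its vertex group by choosing coset representatives, this is routine.
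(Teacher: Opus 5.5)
Your proposal is correct and follows essentially the same route as the paper. Both arguments use the transitive $\widehat T$-action on $\Irr(\M)$ with stabilizer $S$, obtain the scalars $\Psi(x,z;y)$ from normalized isomorphisms $c_{x,y}$, note that rescaling the $c_{x,y}$ produces exactly the coboundaries, and apply Shapiro's lemma to the coinduced module $\Map(\widehat T/S,\Ctimes)$. You are slightly more explicit than the paper about equivalences that move the base point; your remark that these are harmless is right, since translation is naturally isomorphic to the identity functor of the action groupoid when $\widehat T$ is abelian.
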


\begin{proof}
Each invertible simple object $\delta_x$ acts by an autoequivalence, and hence $\widehat{T}$
acts on the finite set of simple objects of the module category.  Indecomposability
is equivalent to transitivity of this action.  Choose a simple object $M_0$, let
$S\subseteq \widehat{T}$ be its stabilizer, and identify the simple objects with $\widehat{T}/S$.
After choosing normalized isomorphisms
\[
 c_{x,y}:\delta_x\otimes M_y\xrightarrow{\cong}M_{xy},
\]
the module associativity constraint becomes multiplication by scalars
$\Psi(x,z;y)$.  The module pentagon and unit axioms give
\eqref{eq:module-cocycle} and the stated normalizations.  

Conversely, those
identities define a module category.  Replacing the maps $c_{x,y}$ by scalar
multiples changes $\Psi$ by a module coboundary, and every such coboundary arises
this way.  

Finally, the coefficient module is the coinduced $\widehat{T}$-module
$$\Map(\widehat{T}/S,\Ctimes),$$
so Shapiro's lemma identifies its second cohomology with
$H^2(S,\Ctimes)$.
\end{proof}

\subsubsection{Explicit equivariant structures}\label{sec:explicit-equivariant-data}

Fix $\M(S,\Psi)$ and a subgroup $L\subseteq K$.  We use the action on $\widehat{T}$ introduced in
\S\ref{sec:tensor-equivalence}.  For $k\in L$, let
\[
 \Psi^k(x,z;y)=\Psi(x^{k^{-1}},z^{k^{-1}};y),
 \qquad
 \gamma_{\ell^{-1},k^{-1}}(\delta_x)
 =\bar\gamma_{k,\ell}(x)\,\id_{\delta_{x^{(k\ell)^{-1}}}}.
\]
Thus $\M(S,\Psi)^k$ has action
$\delta_x\otimes^k\delta_y=\delta_{x^{k^{-1}}y}$ and associator $\Psi^k$.

\begin{theorem}\label{thm:explicit-equivariant-data}
The category $\M(S,\Psi)$ admits an $L$-equivariant structure only if $S$ is
$L$-stable.  If $S$ is $L$-stable, normalized $L$-equivariant structures are
parametrized by data
\[
 U_k\in\operatorname{Sym}(\widehat{T}/S),
 \qquad
 \tau_k:\widehat{T}\times (\widehat{T}/S)\to\Ctimes,
 \qquad
 \mu_{k,\ell}:\widehat{T}/S\to\Ctimes,
\]
for $k,\ell\in L$, satisfying the following equations for all
$i,k,\ell\in L$, $x,z\in \widehat{T}$, and $y\in \widehat{T}/S$:
\begin{equation}
 U_1=\id,\quad
 U_kU_\ell=U_{k\ell},\quad
 U_k(xy)=x^kU_k(y),
 \label{eq:U-data}
 \end{equation}
 \begin{equation}
 \tau_1(x,y)=1,\quad
 \tau_k(1,y)=1,
 \label{eq:tau-normalized}
\end{equation}
\begin{equation}\label{eq:tau-coherence}
 \tau_k(z,y)\tau_k(x,z^{k^{-1}}y)\Psi^k(x,z;y)
 =\Psi(x,z;U_k(y))\tau_k(xz,y),
\end{equation}
\begin{equation}\label{eq:mu-module}
 \bar\gamma_{k,\ell}(x)\tau_{k\ell}(x,y)
 \mu_{k,\ell}(x^{(k\ell)^{-1}}y)
 =\mu_{k,\ell}(y)\tau_k(x,U_\ell(y))
 \tau_\ell(x^{k^{-1}},y),
 \end{equation}
\begin{align}
 \mu_{1,k}(y)&=\mu_{k,1}(y)=1,
 \label{eq:mu-normalized}\\
 \mu_{i,k}(U_\ell(y))\mu_{ik,\ell}(y)
 &=\mu_{k,\ell}(y)\mu_{i,k\ell}(y).
 \label{eq:mu-pentagon}
\end{align}
Here $U_k$ is the permutation induced by the underlying functor of
$$U_k:\M(S,\Psi)^k\to\M(S,\Psi),$$
while $\tau_k$ and $\mu_{k,\ell}$ are the scalar
components of its module structure and coherence isomorphisms.
\end{theorem}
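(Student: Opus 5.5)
The plan is to unwind the abstract definition of an $L$-equivariant structure from \S\ref{sec:equivariant-modules} in the skeletal model $\M(S,\Psi)$ of Proposition~\ref{prop:module-cats-torus}. Every object of $\M(S,\Psi)$ is a direct sum of the simples $\delta_y$, $y\in\widehat T/S$. All morphism spaces between simples are $0$ or $\mathbb C$. Hence every piece of data is either a permutation of $\widehat T/S$ or a family of scalars.

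\emph{Step 1: the underlying functors $U_k$.} A module equivalence $U_k:\M(S,\Psi)^k\to\M(S,\Psi)$ is in particular an equivalence of semisimple categories. Up to natural isomorphism (which we may use to normalize), it is therefore given by a permutation $U_k$ of $\widehat T/S$ sending $\delta_y$ to $\delta_{U_k(y)}$. The module structure is an isomorphism
\[
U_k(\delta_x\otimes^k\delta_y)=\delta_{U_k(x^{k^{-1}}y)}\cong\delta_x\otimes\delta_{U_k(y)}=\delta_{xU_k(y)}.
\]
Its existence forces $U_k(x^{k^{-1}}y)=xU_k(y)$, i.e.\ $U_k(xy)=x^kU_k(y)$. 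Applying this to stabilizers gives the necessary condition. If $y=S$ and $s\in S$, then $U_k(y)=U_k(sy)=s^kU_k(y)$, so $s^k\in S$, because $\widehat T$ is abelian and stabilizers are all equal to $S$. Hence $S$ must be $L$-stable. The scalar of this isomorphism is $\tau_k(x,y)$. The module-functor axiom, comparing the two paths from $U_k((\delta_x\otimes^k\delta_z)\otimes^k\delta_y)$ using the associators $\Psi^k$ and $\Psi$, gives exactly \eqref{eq:tau-coherence}. The unit axiom and $U_1=\id$ give \eqref{eq:tau-normalized}.

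\emph{Step 2: the coherence $\mu$.} The module natural isomorphism $\mu_{k,\ell}$ of \eqref{eq:mu-abstract} first requires that the two sides have the same underlying functor. This gives $U_kU_\ell=U_{k\ell}$ as permutations; here $\Gamma_{k,\ell}$ is the identity on objects, since it only alters the module constraint via $\gamma$. Its components are scalars $\mu_{k,\ell}(y)$. Naturality with respect to the module structure, where the module constraint of a composite is the product of the $\tau$'s and that of $\Gamma_{k,\ell}$ contributes $\bar\gamma_{k,\ell}(x)$, yields \eqref{eq:mu-module}. The pentagon (associativity) for $\mu$ evaluated on $\delta_y$ yields \eqref{eq:mu-pentagon}. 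Normalization yields \eqref{eq:mu-normalized}.

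\emph{Step 3: the converse and the parametrization.} Given data satisfying \eqref{eq:U-data}--\eqref{eq:mu-pentagon}, the same formulas define functors, module structures and coherence isomorphisms, so the equivalence is bijective at the level of normalized data. The main obstacle is the bookkeeping in \eqref{eq:mu-module}. One must pin down precisely the module constraint of $U_k\circ(U_\ell)^k$: the twist $(U_\ell)^k$ sees $\delta_x$ as $\delta_{x^{k^{-1}}}$, whence the argument $\tau_\ell(x^{k^{-1}},y)$. One must also pin down how $\gamma_{\ell^{-1},k^{-1}}$ enters $\Gamma_{k,\ell}$ with the conventions \eqref{eq:module-twist} and the definition of $\bar\gamma$. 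I would carry this out by writing both sides as composites of morphisms $\delta_{x^{(k\ell)^{-1}}y}\to\delta_{xU_{k\ell}(y)}$ and reading off scalars, checking the exponent conventions against $U_k(xy)=x^kU_k(y)$.
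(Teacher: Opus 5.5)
Your proposal is correct and follows the same route as the paper. You unwind the module-functor, module-natural-transformation and pentagon axioms in the skeletal model $\M(S,\Psi)$, reading off $U_k$ from the simple labels and $\tau_k,\mu_{k,\ell}$ as scalars; your explicit stabilizer argument for $L$-stability of $S$ (every point of $\widehat{T}/S$ has stabilizer $S$) just fills in a step the paper asserts without detail.
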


\begin{proof}
A module equivalence $U_k:\M(S,\Psi)^k\xrightarrow{\simeq}\M(S,\Psi)$ permutes the simple objects.
Its module constraint has components
\[
 U_k(\delta_{x^{k^{-1}}y})\longrightarrow
 \delta_x\otimes U_k(\delta_y).
\]
Equality of the two simple labels is equivalent to
$U_k(xy)=x^kU_k(y)$, which also implies that $S$ is $L$-stable.  The module-functor
pentagon and unit axiom are exactly \eqref{eq:tau-normalized} and
\eqref{eq:tau-coherence}.

The natural transformation
\[
 \mu_{k,\ell}:U_k(U_\ell)^k
 \longrightarrow U_{k\ell}\Gamma_{k,\ell}
\]
can have a nonzero component only when $U_kU_\ell=U_{k\ell}$.  The condition that
$\mu_{k,\ell}$ be a module natural transformation gives
\eqref{eq:mu-module}; the appearance of $\bar\gamma_{k,\ell}$ comes from the module
structure of $\Gamma_{k,\ell}$.  Finally, its pentagon is
\eqref{eq:mu-pentagon}.  

Conversely, the displayed equations reconstruct all the
required module functors and coherence maps.
\end{proof}

\subsubsection{Simple objects and the rank-one criterion}
\label{sec:rank-one-criterion}

For $y\in \widehat{T}/S$, let
\[
 L_y:=\{k\in L\mid U_k(y)=y\},
 \qquad
 \mu_y(k,\ell)=\mu_{k,\ell}(y),\quad k,\ell\in L_y.
\]
Equation \eqref{eq:mu-pentagon} shows that $\mu_y\in Z^2(L_y,\Ctimes)$.

\begin{theorem}\label{thm:simple-equivariantized-module}
As an abelian category,
\[
 \M(S,\Psi)^{L}
 \simeq
 \bigoplus_{\mathcal O\in L\backslash (\widehat{T}/S)}
 \Rep(L_y,\mu_y),
 \qquad y\in\mathcal O.
\]
In particular, $\M(S,\Psi)^L$ has rank one if and only if
\begin{enumerate}
\item $L$ acts transitively on $\widehat{T}/S$ through $U$; and
\item $\mu_y$ is nondegenerate on $L_y$ for one, equivalently every, $y\in \widehat{T}/S$.
\end{enumerate}
\end{theorem}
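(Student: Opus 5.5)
This is a specialization of Theorem~\ref{thm:orbit-stabilizer} to $\M=\M(S,\Psi)$, with the $L$-equivariant structure presented through the explicit data $(U,\tau,\mu)$ of Theorem~\ref{thm:explicit-equivariant-data}. The plan is to identify the abstract ingredients of Theorem~\ref{thm:orbit-stabilizer} with this explicit data: the $L$-set $\Irr(\M)$, the stabilizers $L_M$, and the classes $[\alpha_M]$.

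First, $\Irr(\M(S,\Psi))=\{\delta_y\mid y\in\widehat{T}/S\}$, and by construction the functor $U_k$ sends $\delta_y$ to $\delta_{U_k(y)}$. The relation $U_kU_\ell=U_{k\ell}$ in \eqref{eq:U-data} makes this an honest left action of $L$ on $\widehat{T}/S$. Its orbits are the $L$-orbits $\mathcal O\in L\backslash(\widehat{T}/S)$, and the stabilizer of $\delta_y$ is exactly $L_y$.

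Second, I identify the cocycle $\alpha_{\delta_y}$. Take $M_y=\delta_y$ and let the isomorphisms $c_{k,y}:U_k(\delta_y)\to\delta_{U_k(y)}$ be identities. Comparing with \eqref{eq:groupoid-cocycle-from-equivariance}, the component $\mu_{k,\ell}(\delta_y)$ is the scalar $\mu_{k,\ell}(y)$, since $\Gamma_{k,\ell}$ acts trivially on objects. This gives $\omega(k,\ell;y)=\mu_{k,\ell}(y)^{\pm1}$, with the sign fixed by the normalization chosen there. Restricting to loops at $y$ then shows that $[\alpha_{\delta_y}]$ is represented by $\mu_y$ or by $\mu_y^{-1}$. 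Equation~\eqref{eq:mu-pentagon}, specialized to $i,k,\ell\in L_y$, confirms that $\mu_y$ is a $2$-cocycle. Both the rank and nondegeneracy are unaffected by replacing $\mu_y$ with its inverse: $\mathbb C^{\mu^{-1}}[L_y]$ is isomorphic to the opposite algebra of $\mathbb C^{\mu}[L_y]$, and so it is simple if and only if $\mathbb C^{\mu}[L_y]$ is. Likewise, $\Rep(L_y,\mu_y^{-1})$ is equivalent to $\Rep(L_y,\mu_y)$ as an abelian category, for instance via duals, which interchange the two cocycle conventions.

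Substituting these identifications into Theorem~\ref{thm:orbit-stabilizer} gives the decomposition and the rank-one criterion. The assertion ``one, equivalently every'' holds because, for $y'=U_k(y)$, the stabilizers are conjugate and the cocycles $\mu_{y'}$ and $\mu_y$ correspond under this conjugation up to a coboundary, exactly as in Lemma~\ref{lem:transitive-groupoid-reduction}. The main point needing care is that the choice $c_{k,y}=\id$ is legitimate and introduces no extra scalars, so that $\omega$ is literally $\mu$ and not $\mu$ twisted by $\tau$. The $\tau_k$ encode the module constraints and never enter the comparison of objects, so I expect them to drop out.
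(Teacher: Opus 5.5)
Your proposal is correct and is the paper's argument. The paper also applies Theorem~\ref{thm:orbit-stabilizer} to the permutation action of $L$ on $\{\delta_y\}$. It identifies the groupoid cocycle as $\omega(k,\ell;y)=\mu_{k,\ell}(y)$, with \eqref{eq:mu-pentagon} as the cocycle identity, and uses Lemma~\ref{lem:transitive-groupoid-reduction} to pass to the corner $\mathbb C^{\mu_y}[L_y]$.

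Your $\pm1$ hedge is unnecessary. With $c_{k,y}=\id$, equation \eqref{eq:groupoid-cocycle-from-equivariance} gives $\omega=\mu$ exactly, so the detour through opposite algebras and duals, though harmless, can be dropped.
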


\begin{proof}
Apply Theorem~\ref{thm:orbit-stabilizer} to the action of $L$ on the simple
objects $\{\delta_y:y\in \widehat{T}/S\}$.  To spell this out, the action groupoid
$L\ltimes (\widehat{T}/S)$ has an arrow
\[
 [k,y]:y\longrightarrow U_k(y)
\]
for every $k\in L$ and $y\in \widehat{T}/S$.  Equation~\eqref{eq:mu-pentagon} is exactly the
groupoid cocycle identity \eqref{eq:groupoid-cocycle} for
\[
 \omega(k,\ell;y)=\mu_{k,\ell}(y).
\]
Thus the associated twisted algebra has basis $[k,y]$ and multiplication
\begin{equation}\label{eq:mu-twisted-groupoid-product}
 [k,U_\ell(y)]\,[\ell,y]
 =\mu_{k,\ell}(y)[k\ell,y],
\end{equation}
with product zero for noncomposable arrows.

For an $L$-orbit $\mathcal O\subseteq \widehat{T}/S$, an equivariant object supported on
$\mathcal O$ is a collection of vector spaces $V_y$, $y\in\mathcal O$, together
with maps
\[
 \rho(k,y):V_y\longrightarrow V_{U_k(y)}
\]
that satisfy the same relation as
\eqref{eq:mu-twisted-groupoid-product}.  It is therefore a module over the
corresponding orbit algebra.  If $y\in\mathcal O$ and $e_y=[1,y]$, then $e_y$ is
a full idempotent by Lemma~\ref{lem:transitive-groupoid-reduction}, and
\[
 e_y\,\mathbb C^{\mu}[L\ltimes\mathcal O]e_y
 =\bigoplus_{k\in L_y}\mathbb C[k,y]
 \cong\mathbb C^{\mu_y}[L_y].
\]
Consequently, the functor $N\mapsto e_yN$ gives an equivalence between the
objects supported on $\mathcal O$ and $\Rep(L_y,\mu_y)$.  Summing over the
orbits gives the displayed decomposition.  It has exactly one simple object if
and only if there is one orbit and the twisted stabilizer algebra is simple,
which is the stated rank-one criterion.
\end{proof}

\begin{example}[A transitive rank-one equivariantization]\label{ex:mixed-rank-one}
Let $G:=\Gm\times \mathbb{Z}/2\mathbb{Z}$.  Then $\widehat{T}=\mathbb{Z}$ and the action of $\mathbb{Z}/2\mathbb{Z}$ on $\widehat{T}$ is
trivial.  Take $S:=2\mathbb{Z}$, so $\widehat{T}/S\cong \mathbb{Z}/2\mathbb{Z}$, take $\Psi=1$, and let the nontrivial
element $g\in \mathbb{Z}/2\mathbb{Z}$ act by translation; that is,
\[
 U_g([m])=[m+1].
\]
With $\tau=1$ and $\mu=1$, all equations in Theorem~\ref{thm:explicit-equivariant-data} hold.  The action on $\widehat{T}/S$ is transitive and the
stabilizer is trivial, so $\M(2\mathbb{Z},1)^{\mathbb{Z}/2\mathbb{Z}}$ has rank one.  Notice that
$S\neq \widehat{T}$ and $U\neq\id$. \qed
\end{example}

\subsection{\texorpdfstring{Fiber functors and Hopf $2$-cocycles}{Fiber functors and Hopf 2-cocycles}}\label{sec:fiber-functors}

Call a sextuple
\[
 \mathcal D=(S,\Psi,L,U,\tau,\mu)
\]
{\bf admissible} if $S\subseteq \widehat{T}$ has finite index, $L\subseteq K$ is a subgroup, and the remaining data
satisfy Theorem~\ref{thm:explicit-equivariant-data}.  It is {\bf rank-one
admissible} if the two conditions in Theorem~\ref{thm:simple-equivariantized-module} hold.

Two admissible data are called {\bf equivalent} if, after conjugating one of the
subgroups $L$ by an element of $K$, the corresponding equivariant
$\Rep(T)$-module categories are equivalent.  This relation includes the usual
changes of cocycle representatives and of the scalar coherence cochains.

\begin{theorem}\label{thm:module-and-fiber-classification}
The following statements hold.
\begin{enumerate}
\item Equivalence classes of finite indecomposable semisimple $\Rep(G)$-module
categories are in bijection with equivalence classes of admissible data
\[
 (S,\Psi,L,U,\tau,\mu).
\]
\item Equivalence classes of fiber functors on $\Rep(G)$ are in bijection with
equivalence classes of rank-one admissible data.
\item Every fiber functor on $\Rep(G)$ is classical.
\end{enumerate}
\end{theorem}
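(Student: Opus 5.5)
Part (1) follows by chaining the earlier results. Theorem~\ref{thm:tensor-equivalence} identifies $\Rep(G)$ with $\Rep(T)^K$, so module categories over the two correspond. Proposition~\ref{prop:equivariantization-modules} then says that every finite indecomposable semisimple $\Rep(T)^K$-module category has the form $\M^L$, with $L\subseteq K$ and $\M$ an indecomposable $L$-equivariant $\Rep(T)$-module category; the equivalence relation is conjugation in $K$ together with equivariant module equivalence. By Proposition~\ref{prop:module-cats-torus}, $\M\simeq\M(S,\Psi)$. By Theorem~\ref{thm:explicit-equivariant-data}, the $L$-equivariant structures on $\M(S,\Psi)$ are exactly the data $(U,\tau,\mu)$. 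The equivalence relation on admissible data was defined precisely to match the one in Proposition~\ref{prop:equivariantization-modules}, so the bijection follows. The one point to check is that transporting the equivariant structure along an equivalence $\M\simeq\M(S,\Psi)$ is compatible with the relation, and this is formal.

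For (2), recall that a fiber functor on $\Rep(G)$ is the same as a rank-one module category, namely the module category $\Vect$ with module structure given by the tensor structure of the functor. Isomorphism classes of fiber functors correspond to equivalence classes of such module categories, since an equivalence of rank-one module categories is the same as a monoidal isomorphism of the functors. We restrict the bijection of (1) to rank-one objects and apply Theorem~\ref{thm:simple-equivariantized-module}: $\M(S,\Psi)^L$ has rank one exactly when the data are rank-one admissible.

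For (3), we must show that the dimension function of a fiber functor $F'$ attached to rank-one data equals $\dim$. The functor is $F'(V)=\Hom(N, V\otimes N)$ for the unique simple object $N$ of $\M^L$, or equivalently the multiplicity of $N$ in $V\otimes N$. To compute it, we take $N$ with underlying object $\bigoplus_{y\in\widehat T/S}W\otimes\delta_y$, where $W$ is the unique simple $\mu_{y_0}$-projective representation of $L_{y_0}$. Transitivity gives $\dim N_{\mathrm{underlying}}=[\widehat T:S]\cdot\dim W$ in units of simples of $\M$.

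Since $\delta_x\otimes-$ is an autoequivalence of $\M$ permuting the simples, the object $V\otimes N$ has underlying object in $\M$ equal to $\dim V$ copies of the underlying object of $N$, up to permuting summands. The key step is to show that $V\otimes N\cong N^{\oplus\dim V}$ in $\M^L$. Because $N$ is the only simple object of the semisimple category $\M^L$, we have $V\otimes N\cong N^{\oplus m}$ for some $m$. Applying the forgetful functor $\M^L\to\M$, which is faithful and additive, and comparing the total multiplicity of simples of $\M$ then forces $m=\dim V$.

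The only place needing care is that the underlying action of $V\in\Rep(G)$ on $\M$ is via restriction to $T$, which holds by the $\C^K$-module structure recalled in \S\ref{sec:equivariant-modules}. Hence $\dim F'(V)=\dim V$. Two additive functors on a semisimple category with the same values on simples are isomorphic, so $F'\cong F$ as linear functors, and $F'$ is classical.

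I expect the main obstacle to lie in (1): justifying the Morita-type correspondence of Proposition~\ref{prop:equivariantization-modules} in the non-finite setting. Since that proposition is assumed, the remaining risk is bookkeeping in matching the equivalence relations.
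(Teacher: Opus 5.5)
Your proposal is correct and takes essentially the same route as the paper. Part (1) chains Theorem~\ref{thm:tensor-equivalence}, Proposition~\ref{prop:equivariantization-modules}, Proposition~\ref{prop:module-cats-torus} and Theorem~\ref{thm:explicit-equivariant-data}; part (2) restricts to rank-one module categories via Theorem~\ref{thm:simple-equivariantized-module}; and part (3) uses the same total-multiplicity count in $\M(S,\Psi)$ to get $\dim F_{\V}(X)=\dim X$, then concludes by semisimplicity. Your extra observation that $\delta_x\otimes\overline{N}\cong\overline{N}$ is a harmless strengthening, since the length comparison alone already suffices.
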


\begin{proof}
(1) By Theorem~\ref{thm:tensor-equivalence}, Proposition~\ref{prop:equivariantization-modules}, and Proposition~\ref{prop:module-cats-torus}, every finite indecomposable module category has the
form
\[
 \bigl(\M(S,\Psi),U,\tau,\mu\bigr)^L,
\]
and the stated equivalence relation is precisely the one arising from those three
results.  

(2) This follows from Theorem~\ref{thm:simple-equivariantized-module}, because fiber functors are the same as
rank-one module categories.

(3) Let $\V$ be a rank-one module category corresponding to
rank-one admissible data, and let $V$ be its unique simple object.  Write the
underlying object of $V$ in $\M(S,\Psi)$ as
\[
 \overline V=\bigoplus_{y\in \widehat{T}/S}V_y\otimes\delta_y.
\]
Transitivity implies that every $V_y$ is nonzero and that all $\dim (V_y)$ are equal.
For an object $M=\bigoplus_yM_y\otimes\delta_y$ of $\M(S,\Psi)$, set
\[
 \ell(M):=\sum_{y\in \widehat{T}/S}\dim (M_y).
\]
If $X\in\Rep(G)$ and
$X|_T=\bigoplus_{x\in \widehat{T}}X_x\otimes\delta_x$, then the object-level action gives
\[
 \ell(X\otimes\overline V)
 =\left(\sum_{x\in \widehat{T}}\dim (X_x)\right)\ell(\overline V)
 =\dim(X)\ell(\overline V).
\]
Let $F_{\V}:\Rep(G)\to\Vect$ be the fiber functor defined by the action on the
rank-one category.  Since
\[
 X\otimes V\cong F_{\V}(X)\otimes V,
\]
the same length function gives
\[
 \dim (F_{\V}(X))\,\ell(\overline V)
 =\dim(X)\,\ell(\overline V).
\]
Hence $\dim (F_{\V}(X))=\dim (X)$ for every $X$.

Because $\Rep(G)$ is semisimple, an exact $\mathbb C$-linear functor to $\Vect$ is
determined up to natural isomorphism by its values on simple objects.  Choosing
vector-space isomorphisms on the simples therefore yields a natural isomorphism
between the underlying linear functor of $F_{\V}$ and the usual forgetful functor.
Thus $F_{\V}$ is classical.
\end{proof}

\begin{corollary}\label{cor:hopf-cocycle-classification}
Gauge-equivalence classes of Hopf $2$-cocycles on $\mathscr{O}(G)$ are in bijection
with equivalence classes of rank-one admissible data
\[
 (S,\Psi,L,U,\tau,\mu).
\]
\end{corollary}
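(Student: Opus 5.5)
The plan is to chain the bijection of Theorem~\ref{thm:module-and-fiber-classification}(2) with the standard dictionary between Hopf $2$-cocycles and tensor structures on the forgetful functor recalled in \S\ref{sec:hopf-cocycles}. That section identifies gauge-equivalence classes of Hopf $2$-cocycles on $\mathscr{O}(G)$ with tensor structures on the forgetful functor $F:\Rep(G)\to\Vect$, taken up to monoidal natural isomorphism. Here $\Rep(G)=\Corep(\mathscr{O}(G))$. So it suffices to identify the latter set with equivalence classes of fiber functors on $\Rep(G)$, and then apply Theorem~\ref{thm:module-and-fiber-classification}(2).

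To build the map, I send a tensor structure $J$ on $F$ to the fiber functor $(F,J)$. By Theorem~\ref{thm:module-and-fiber-classification}(3), this map is surjective onto isomorphism classes of fiber functors. Indeed, any fiber functor $(F',J')$ admits a linear natural isomorphism $\phi:F'\cong F$. Transporting $J'$ along $\phi$ gives a tensor structure on $F$ itself, and $\phi$ becomes a monoidal isomorphism $(F',J')\cong(F,\phi J'\phi^{-1})$.

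For injectivity, two tensor structures on $F$ give isomorphic fiber functors exactly when there is a monoidal natural isomorphism between them. Such an isomorphism is a natural automorphism of $F$, that is, an element $a\in(\mathscr{O}(G)^*)^\times$. It must intertwine the two tensor constraints, and that condition is precisely the gauge relation $J'=J^a$. Hence gauge classes on $\mathscr{O}(G)$ correspond bijectively to isomorphism classes of fiber functors.

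Next I pass from fiber functors to module categories. A fiber functor $E$ corresponds to the rank-one module category $\Vect$ with $\Rep(G)$ acting through $E$. Two fiber functors are isomorphic as tensor functors precisely when the corresponding rank-one module categories are equivalent. This holds because a module equivalence $\Vect\to\Vect$ is isomorphic to the identity up to tensoring with a one-dimensional space, and its module structure is then a monoidal isomorphism $E\cong E'$. Composing with Theorem~\ref{thm:module-and-fiber-classification}(2) gives the bijection with equivalence classes of rank-one admissible data.

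The main subtlety I expect is this last matching of equivalence relations. The notion of "equivalence of fiber functors" in Theorem~\ref{thm:module-and-fiber-classification}(2) must be isomorphism of tensor functors, not something coarser such as conjugation by automorphisms of $G$. I will check that the equivalence relation used in Theorem~\ref{thm:module-and-fiber-classification}(1) restricts on rank-one data to module equivalence of the rank-one categories. That restriction is exactly isomorphism of fiber functors, as argued above.
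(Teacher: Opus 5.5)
Your proposal is correct and follows essentially the same route as the paper. You use Theorem~\ref{thm:module-and-fiber-classification}(3) to transport the tensor structure of any fiber functor onto the forgetful functor $F$, identify monoidal natural automorphisms of $F$ (elements of $(\mathscr{O}(G)^*)^\times$) with gauge transformations via \S\ref{sec:hopf-cocycles}, and then invoke Theorem~\ref{thm:module-and-fiber-classification}(2); your extra step matching isomorphism of fiber functors with equivalence of rank-one module categories simply spells out what the paper takes for granted in part (2).
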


\begin{proof}
Transport the tensor structure of a fiber functor along the natural isomorphism
with the forgetful functor supplied by Theorem~\ref{thm:module-and-fiber-classification}(3), and use
\S\ref{sec:hopf-cocycles}.  Monoidal natural isomorphisms correspond exactly to
gauge transformations.
\end{proof}

We now study the support and minimality of a Hopf $2$-cocycle on $\mathscr O(G)$.

\begin{theorem}\label{prop:factorization}
Let
\[
 \mathcal D:=(S,\Psi,L,U,\tau,\mu)
\]
be rank-one admissible data, and let $y_0:=S\in \widehat{T}/S$.  Since $S$ fixes
$y_0$, the formula
\begin{equation}\label{eq:toral-isotropy-cocycle}
 \psi_{\mathcal D}(s,t):=\Psi(s,t;y_0),
 \qquad s,t\in S,
\end{equation}
defines a normalized cocycle $\psi_{\mathcal D}\in Z^2(S,\Ctimes)$.  Let
\[
 R_{\mathcal D}(s,t)
 :=\frac{\psi_{\mathcal D}(s,t)}{\psi_{\mathcal D}(t,s)},
 \] 
 \[
 \Lambda_{\mathcal D}
 :=\Rad(R_{\mathcal D})
 =\{s\in S\mid R_{\mathcal D}(s,t)=1\text{ for all }t\in S\},
\]
and let
\[
 T_{\mathcal D}:=\Lambda_{\mathcal D}^{\perp}
 =\{a\in T\mid s(a)=1\text{ for every }s\in\Lambda_{\mathcal D}\}.
\]
Let $J_{\mathcal D}$ be the Hopf $2$-cocycle corresponding to $\mathcal D$, and let
$H_{\mathcal D}\subseteq G$ be its support.  Then after conjugating
$H_{\mathcal D}$ in $G$, one has
\begin{equation}\label{eq:support-from-admissible-data}
 \pi(H_{\mathcal D})=L,
 \qquad
 H_{\mathcal D}\cap T=T_{\mathcal D}.
\end{equation}
Consequently,
\begin{equation}\label{eq:minimality-full-data}
 J_{\mathcal D}\text{ is minimal}
 \quad\Longleftrightarrow\quad
 L=K\ \text{ and }\ \Lambda_{\mathcal D}=\{1\}.
\end{equation}
Equivalently, gauge-equivalence classes of minimal Hopf $2$-cocycles on
$\mathscr O(G)$ are parametrized by equivalence classes of admissible data
\[
 (S,\Psi,K,U,\tau,\mu)
\]
for which $K$ acts transitively on $\widehat{T}/S$, the stabilizer cocycle $\mu_{y_0}$ is
nondegenerate, and the alternating bicharacter $R_{\mathcal D}$ on $S$ is
nondegenerate.
\end{theorem}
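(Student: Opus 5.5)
Throughout, we use the standard description of the support of a Hopf $2$-cocycle on $\mathscr O(G)$: it is the smallest closed subgroup $H\subseteq G$ such that $J$ is gauge equivalent to a cocycle pulled back from $\mathscr O(H)$, i.e.\ the smallest $H$ through which the fiber functor factors as $\Rep(G)\to\Rep(H)\to\Vect$. The cocycle $J$ is minimal exactly when $H=G$ (cf.\ \cite{EG1,G1}).

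\textbf{Fiber functors factoring through a subgroup.} Let $H\subseteq G$ be a closed subgroup with $H^\circ\subseteq T$. A fiber functor on $\Rep(G)$ factors through $\Rep(H)$ iff the corresponding rank-one module category is induced from a rank-one $\Rep(H)$-module category along the restriction $\Rep(G)\to\Rep(H)$. We apply Theorem~\ref{thm:module-and-fiber-classification} to $H$ in place of $G$, with $T_H:=H\cap T$ (a diagonalizable group, as the arguments allow) and $\widehat{T_H}=\widehat T/T_H^\perp$, and then compare data.

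Restriction $\Rep(T)\to\Rep(T_H)$ sends $\M(S',\Psi')$ over $T_H$ to $\M(S,\Psi)$ over $T$. Here $S$ is the preimage of $S'$, so $S\supseteq T_H^\perp$, and $\Psi$ is inflated from $\Psi'$. Similarly, restriction along $\pi(H)\subseteq K$ corresponds to inducing $L$-equivariant data, giving $L=\pi(H)$ up to conjugacy. Reading this backwards, for given data $\mathcal D$ the candidates $H$ (up to conjugacy) are those with $\pi(H)\supseteq L$ whose torus part $T_H$ satisfies: $T_H^\perp\subseteq S$, and $\psi_{\mathcal D}$ is cohomologous to a cocycle inflated from $S/T_H^\perp$. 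A class in $H^2(S,\Ctimes)$ for an abelian group is determined by its alternating bicharacter $R_{\mathcal D}$ (up to the torsion-free/divisibility issue, which is harmless since $\Ctimes$ is divisible). Such a class inflates from $S/\Lambda'$ iff $\Lambda'\subseteq\Rad(R_{\mathcal D})$. Hence the largest admissible subgroup of characters is $\Lambda_{\mathcal D}$, and the smallest torus part is $\Lambda_{\mathcal D}^\perp=T_{\mathcal D}$.

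\textbf{Building the minimal subgroup.} Next we check that the minimal choices are achieved simultaneously by a genuine subgroup $H_{\mathcal D}=\pi^{-1}(L)\cap(\text{something})$ with $H\cap T=T_{\mathcal D}$ and $\pi(H)=L$. This requires two facts. First, $\Lambda_{\mathcal D}$ is $L$-stable; this follows from the $\tau$-coherence \eqref{eq:tau-coherence}, which shows that $\Psi^k$ and $\Psi\circ U_k$ are cohomologous, so $R_{\mathcal D}$ is $L_{y_0}$-invariant, with transitivity handling the other points. Second, the extension class of $\pi^{-1}(L)$ pushed to $T/T_{\mathcal D}$ must split compatibly with $(U,\tau,\mu)$. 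We expect this second fact to be the main obstacle: the data must be shown to descend to the quotient, so that the equivariant data is genuinely defined over $H$. We would attempt it using Lemma~\ref{lem:extension-data}(3) to choose a finite-valued $\eta$, and by interpreting $\mu_{k,\ell}$ restricted along $\widehat T/S$ as supplying the splitting.

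\textbf{Minimality.} Once \eqref{eq:support-from-admissible-data} is proved, minimality means $H_{\mathcal D}=G$. Since $G$ is generated by $T$ and sections over $K$, this holds iff $\pi(H_{\mathcal D})=K$ and $H_{\mathcal D}\cap T=T$, i.e.\ $L=K$ and $\Lambda_{\mathcal D}^\perp=T$, i.e.\ $\Lambda_{\mathcal D}=\{1\}$ by character duality for tori. Combining this with Corollary~\ref{cor:hopf-cocycle-classification} and the rank-one criterion of Theorem~\ref{thm:simple-equivariantized-module} with $L=K$ gives the stated parametrization.
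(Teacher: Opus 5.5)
Your factorization argument only produces lower bounds. If the fiber functor factors through a closed subgroup $H$, comparing $\mathcal D$ with the restricted data from $\Rep(H)$ does give, up to conjugacy, $L\subseteq\pi(H)$ and $(H\cap T)^\perp\subseteq\Lambda_{\mathcal D}$, i.e.\ $H\cap T\supseteq T_{\mathcal D}$. (This assumes Theorem~\ref{thm:module-and-fiber-classification} extends to groups whose toral part $H\cap T$ is merely diagonalizable.) Applied to the support, this yields $\pi(H_{\mathcal D})\supseteq L$ and $H_{\mathcal D}\cap T\supseteq T_{\mathcal D}$. From these you get ``$L=K$ and $\Lambda_{\mathcal D}=\{1\}$ $\Rightarrow$ minimal''. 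The implication ``minimal $\Rightarrow L=K$'' is immediate, since everything factors through $\pi^{-1}(L)$.

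The gap is the other half. The equalities in \eqref{eq:support-from-admissible-data}, and with them ``minimal $\Rightarrow\Lambda_{\mathcal D}=\{1\}$'', require a closed subgroup $H_0\subseteq\pi^{-1}(L)$ with $\pi(H_0)=L$ and $H_0\cap T=T_{\mathcal D}$ through which the fiber functor actually factors. Producing $H_0$ needs three things:
\begin{itemize}
\item $L$-stability of $\Lambda_{\mathcal D}$;
\item a splitting of $1\to T/T_{\mathcal D}\to\pi^{-1}(L)/T_{\mathcal D}\to L\to 1$;
\item descent to $H_0$ of the full datum $(\Psi,U,\tau,\mu)$, not just of the class of $\psi_{\mathcal D}$.
\end{itemize}
You call the splitting and descent ``the main obstacle'' and only indicate how you would attempt them. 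Nothing in the admissibility equations makes them evident. Correspondingly, your list of ``candidate'' subgroups $H$ is only a necessary condition, since it ignores whether $U,\tau,\mu$ and the extension class descend. So the substantive half of the theorem is not proved.

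The paper never constructs $H_{\mathcal D}$. It takes the internal endomorphism algebra $\mathcal A_{\mathcal D}$ of the simple object in $\operatorname{Ind}(\Rep(G_L))$. Via the groupoid reduction, it identifies the underlying algebra with $\mathcal B_{\mathcal D}\cong\operatorname{Mat}_{d|\widehat T/S|}(\mathbb C^{\psi_{\mathcal D}}[S])$. From this it reads off that the center has $T$-weights exactly $\Lambda_{\mathcal D}$, each with multiplicity one. It then compares with $Z(\mathcal A_{\mathcal D})\cong\mathscr O(G_L/H_{\mathcal D})$ from \cite[Theorem~3.5]{G1}:
\begin{itemize}
\item the $T$-invariants force $[L:\pi(H_{\mathcal D})]=1$;
\item the weights force $(H_{\mathcal D}\cap T)^\perp=\Lambda_{\mathcal D}$.
\end{itemize}
The splitting you need then follows from the existence of the support, rather than having to be extracted by hand from \eqref{eq:tau-coherence} and \eqref{eq:mu-module}. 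That center computation is the idea missing from your proposal.
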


\begin{proof}
Set $G_L:=\pi^{-1}(L)$.  The module category attached to $\mathcal D$ is first a
rank-one $\Rep(G_L)\simeq\Rep(T)^L$-module category, and its $\Rep(G)$-action is
obtained by restriction along $\Rep(G)\to\Rep(G_L)$.  Thus it is enough to compute
the support of the corresponding cocycle on $G_L$.

Let $V$ be the unique simple object of $\M(S,\Psi)^L$, and write its image in
$\M(S,\Psi)$ as
\[
 \overline V=\bigoplus_{y\in \widehat{T}/S}V_y\otimes\delta_y.
\]
By transitivity, every $V_y$ is nonzero and all the spaces $V_y$ have the same
dimension, say $d$.  Let $\mathcal A_{\mathcal D}$ be the internal endomorphism
algebra of $V$ in $\operatorname{Ind}(\Rep(G_L))$.  The induction--forgetful
adjunction for equivariantization identifies its image in
$\operatorname{Ind}(\Rep(T))$ with the internal endomorphism algebra
\[
 \mathcal B_{\mathcal D}
 :=\underline{\End}_{\M(S,\Psi)}(\overline V).
\]
Explicitly, $\mathcal B_{\mathcal D}$ is the $\widehat{T}$-graded algebra with
\[
 (\mathcal B_{\mathcal D})_x
 =\bigoplus_{y\in \widehat{T}/S}\Hom(V_y,V_{xy}),
 \qquad x\in \widehat{T},
\]
and, for $f\in(\mathcal B_{\mathcal D})_x$ and
$g\in(\mathcal B_{\mathcal D})_z$, multiplication is given by
\begin{equation}\label{eq:internal-end-product}
 (f\cdot g)_y
 =\Psi(x,z;y)\,f_{zy}\circ g_y.
\end{equation}

We now explain the groupoid reduction used to compute this algebra.  Let
\[
 \mathcal G:=A\ltimes (\widehat{T}/S).
\]
Its objects are the cosets $y\in \widehat{T}/S$, and for every $x\in \widehat{T}$ and $y\in \widehat{T}/S$ it
has an arrow
\[
 [x,y]:y\longrightarrow xy.
\]
The groupoid is transitive: if $y=aS$, then $[a,y_0]$ is an arrow from
$y_0=S$ to $y$.  Its isotropy group at $y_0$ is
\[
 \operatorname{Aut}_{\mathcal G}(y_0)
 =\{[x,y_0]\mid xy_0=y_0\}
 =\{[s,y_0]\mid s\in S\}\cong S.
\]
Moreover, the module-cocycle identity \eqref{eq:module-cocycle} is precisely the
groupoid cocycle identity \eqref{eq:groupoid-cocycle}.  We may therefore form the
twisted groupoid algebra
\[
 \mathcal G_{\Psi}:=\mathbb C^{\Psi}[\widehat{T}\ltimes (\widehat{T}/S)],
\]
whose multiplication is
\begin{equation}\label{eq:Psi-twisted-groupoid-product}
 [x,zy][z,y]=\Psi(x,z;y)[xz,y].
\end{equation}
After choosing bases $V_y\cong\mathbb C^d$, formula
\eqref{eq:internal-end-product} identifies $\mathcal B_{\mathcal D}$ with the
matrix-enriched version of this algebra:
\begin{equation}\label{eq:internal-end-as-groupoid-algebra}
 \mathcal B_{\mathcal D}
 \cong
 \operatorname{Mat}_d(\mathcal G_{\Psi}).
\end{equation}
Indeed, the copy of $\operatorname{Mat}_d(\mathbb C)$ attached to the arrow
$[x,y]$ is exactly $\Hom(V_y,V_{xy})$, and
\eqref{eq:Psi-twisted-groupoid-product} reproduces
\eqref{eq:internal-end-product}.

Let $e_y=[1,y]$.  The corner at the base point consists of the loops labelled by
$S$:
\begin{equation}\label{eq:isotropy-corner-S}
 e_{y_0}\mathcal G_{\Psi}e_{y_0}
 =\bigoplus_{s\in S}\mathbb C[s,y_0]
 \cong\mathbb C^{\psi_{\mathcal D}}[S],
\end{equation}
where the last assertion follows from
$[s,y_0][t,y_0]=\Psi(s,t;y_0)[st,y_0]$.
Choose representatives $r_y\in \widehat{T}$ with $r_yy_0=y$ and $r_{y_0}=1$, and let
$p_y,q_y$ be the normalized connecting arrows of
\eqref{eq:connecting-arrows}.  The explicit map
\[
 a\in e_{y'}\mathcal G_{\Psi}e_y
 \longmapsto
 E_{y',y}\otimes q_{y'}ap_y
\]
from \eqref{eq:explicit-groupoid-matrix-map} gives
\[
 \mathcal G_{\Psi}
 \cong
 \operatorname{Mat}_{|\widehat{T}/S|}\!\left(
   \mathbb C^{\psi_{\mathcal D}}[S]
 \right).
\]
Combining this with \eqref{eq:internal-end-as-groupoid-algebra} yields the desired
algebra isomorphism
\begin{equation}\label{eq:internal-end-matrix-algebra}
 \mathcal B_{\mathcal D}
 \cong
 \operatorname{Mat}_{d|\widehat{T}/S|}\!\left(\mathbb C^{\psi_{\mathcal D}}[S]\right).
\end{equation}

It remains to keep track of the $\widehat{T}$-grading and the center.  If $u_s$ denotes the
basis element of $\mathbb C^{\psi_{\mathcal D}}[S]$ corresponding to $s\in S$,
then the above matrix realization has the shifted grading
\begin{equation}\label{eq:shifted-matrix-grading}
 \deg(E_{y',y}\otimes u_s)=r_{y'}sr_y^{-1}.
\end{equation}
In particular, every diagonal element $E_{y,y}\otimes u_s$ has degree $s$,
because $\widehat{T}$ is abelian.  Furthermore,
\[
 u_su_t=\psi_{\mathcal D}(s,t)u_{st},
 \qquad
 u_tu_s=\psi_{\mathcal D}(t,s)u_{st},
\]
so
\begin{equation}\label{eq:center-of-twisted-S-algebra}
 Z\!\left(\mathbb C^{\psi_{\mathcal D}}[S]\right)
 =\bigoplus_{s\in\Lambda_{\mathcal D}}\mathbb C u_s.
\end{equation}
Since the center of a full matrix algebra consists of scalar matrices over the
center of its coefficient algebra,
\[
 Z(\mathcal B_{\mathcal D})
 =I_{d|\widehat{T}/S|}\otimes
  Z\!\left(\mathbb C^{\psi_{\mathcal D}}[S]\right).
\]
Under the inverse of the groupoid matrix map, the element
$I_{|\widehat{T}/S|}\otimes u_s$ is
\[
 \sum_{y\in \widehat{T}/S}p_yu_sq_y,
\]
a sum of loops, one at each object $y$.  Each summand has degree
$r_ysr_y^{-1}=s$.  Thus this central element is homogeneous of degree $s$, and
\eqref{eq:center-of-twisted-S-algebra} shows that the $T$-weights occurring in the
center are precisely the elements of $\Lambda_{\mathcal D}$.  In particular,
\begin{equation}\label{eq:center-weights}
 Z(\mathcal B_{\mathcal D})_x\neq0
 \quad\Longleftrightarrow\quad
 x\in\Lambda_{\mathcal D},
 \qquad
 \dim Z(\mathcal B_{\mathcal D})_1=1.
\end{equation}

By Theorem~\ref{thm:module-and-fiber-classification}, the fiber functor defined by
$\mathcal D$ is classical.  Therefore $\mathcal A_{\mathcal D}$ is the one-sided
twisted torsor algebra associated to the corresponding Hopf $2$-cocycle on
$G_L$ (up to passing to the opposite algebra, which does not change the center).
If $H_{\mathcal D}$ is its support, then
\cite[Theorem~3.5]{G1} gives an isomorphism of $G_L$-algebras
\begin{equation}\label{eq:center-support-homogeneous-space}
 Z(\mathcal A_{\mathcal D})\cong\mathscr O(G_L/H_{\mathcal D}).
\end{equation}
The degree-one part in \eqref{eq:center-weights} is one-dimensional.  On the other
hand, the $T$-orbits in $G_L/H_{\mathcal D}$ are indexed by
$L/\pi(H_{\mathcal D})$; hence
\[
 \dim\mathscr O(G_L/H_{\mathcal D})^T
 =[L:\pi(H_{\mathcal D})].
\]
Thus $\pi(H_{\mathcal D})=L$.

It follows that $T$ acts transitively on $G_L/H_{\mathcal D}$ and that
\[
 G_L/H_{\mathcal D}
 \cong T/(H_{\mathcal D}\cap T)
\]
as $T$-varieties.  Consequently, the group of $T$-weights occurring in
$\mathscr O(G_L/H_{\mathcal D})$ is
$(H_{\mathcal D}\cap T)^\perp$.  Comparing this with
\eqref{eq:center-weights} yields
\[
 (H_{\mathcal D}\cap T)^\perp=\Lambda_{\mathcal D},
\]
and therefore $H_{\mathcal D}\cap T=T_{\mathcal D}$.  This proves
\eqref{eq:support-from-admissible-data}.

Finally, the lift of the cocycle from $G_L$ to $G$ has the same support
$H_{\mathcal D}$.  Hence it is minimal on $G$ if and only if
$H_{\mathcal D}=G$, which, by \eqref{eq:support-from-admissible-data}, is equivalent
to $L=K$ and $\Lambda_{\mathcal D}=\{1\}$.  The last formulation simply combines
this condition with the rank-one criterion in
Theorem~\ref{thm:simple-equivariantized-module}.
\end{proof}

\begin{remark}\label{rem:two-nondegeneracy-conditions}
The two nondegeneracy conditions in Theorem~\ref{prop:factorization} play
different roles.  Nondegeneracy of $\mu_{y_0}$ on the finite stabilizer
$L_{y_0}$ is the rank-one condition, whereas nondegeneracy of
$R_{\mathcal D}$ on the lattice $S$ is the additional condition that the torus
part of the support be all of $T$.  In particular,
$\psi_{\mathcal D}$ is an isotropy cocycle extracted from the full module-category
data; it need not coincide with the restriction of $J_{\mathcal D}$ to
$\mathscr O(T)$, and it can encode mixed cocycle information. \qed
\end{remark}

\section{Commutative direct products and mixed cocycles}
\label{sec:commutative-direct-products}

Assume in this section that $G:=T\times K$ and that $K$ is finite {\bf abelian}.  Then $G$
is diagonalizable and
\[
 \mathscr{O}(G)=\mathbb{C}[\widehat T\times \widehat K].
\]
Hopf $2$-cocycles on $\mathscr{O}(G)$ are therefore ordinary normalized group $2$-cocycles on the
abelian group $\widehat T\times \widehat K$ \cite{G1}.

\subsection{The canonical K\"unneth decomposition}

\begin{theorem}\label{thm:kunneth}
There is a canonical group isomorphism
\begin{equation}\label{eq:kunneth}
 H^2(\widehat T\times \widehat K,\Ctimes)
 \cong
 H^2(\widehat T,\Ctimes)
 \times \Hom(\widehat T\otimes_{\mathbb Z}\widehat K,\Ctimes)
 \times H^2(\widehat K,\Ctimes).
\end{equation}
If $\psi$ and $\xi$ are normalized $2$-cocycles on $\widehat T$ and $\widehat K$, respectively, and
$\chi:\widehat T\times \widehat K\to\Ctimes$ is a bicharacter, then the corresponding class is
represented by
\begin{equation}\label{eq:mixed-cocycle}
 J_{\psi,\chi,\xi}\bigl((x,\alpha),(y,\beta)\bigr)
 =\psi(x,y)\chi(x,\beta)\xi(\alpha,\beta).
\end{equation}
The inverse of \eqref{eq:kunneth} sends $[J]$ to
\[
 \left(
 [J|_{(\widehat T\times\{1\})^2}],
 \chi_J,
 [J|_{(\{1\}\times \widehat K)^2}]
 \right),
 \qquad
 \chi_J(x,\beta)=R^J((x,1),(1,\beta)).
\]
\end{theorem}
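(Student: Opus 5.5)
The plan is to avoid the general K\"unneth formula and instead use the description of $H^2(A,\Ctimes)$ for an abelian group $A$ by alternating bicharacters. Here $\widehat T\cong\mathbb Z^r$ and $\widehat K\cong K$ are finitely generated abelian groups. Since $\Ctimes$ is divisible, $\operatorname{Ext}^1_{\mathbb Z}(A,\Ctimes)=0$, so the universal coefficient theorem gives
\[
 H^2(A,\Ctimes)\cong\Hom(H_2(A,\mathbb Z),\Ctimes)=\Hom(\Lambda^2A,\Ctimes).
\]
Concretely, $[J]\mapsto\operatorname{Alt}_J(a,b)=J(a,b)/J(b,a)$ is an injective map from $H^2(A,\Ctimes)$ onto the group of alternating bicharacters on $A$. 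For $A$ abelian this alternating form is exactly the commutator pairing $R^J$ of the twisted group algebra.

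Injectivity is the standard fact that a cocycle with symmetric commutator form makes $\mathbb C^J[A]$ commutative. It is then a central extension of $A$ by $\Ctimes$ that is abelian, and it splits because $\Ctimes$ is divisible, hence injective. Surjectivity onto alternating bicharacters is checked on a cyclic decomposition of $A$: given an alternating form $\omega$, choose a generating set $e_i$ and define a bilinear $\beta$ by $\beta(e_i,e_j)=\omega(e_i,e_j)$ for $i<j$ and $1$ otherwise. This $\beta$ is well defined because the values are $n$-th roots of unity compatible with the orders, which is where the root-of-unity constraints must be checked. Then $\beta(a,b)/\beta(b,a)=\omega(a,b)$, since both sides are bicharacters agreeing on generators and $\omega(e_i,e_i)=1$.

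Next, apply this to $A=\widehat T\times\widehat K$. An alternating bicharacter on $A$ decomposes uniquely as
\[
 \omega\bigl((x,\alpha),(y,\beta)\bigr)=\omega_T(x,y)\,\chi(x,\beta)\,\chi(y,\alpha)^{-1}\,\omega_K(\alpha,\beta),
\]
where $\omega_T$ and $\omega_K$ are alternating and $\chi(x,\beta)=\omega((x,1),(1,\beta))$ is an arbitrary bicharacter $\widehat T\times\widehat K\to\Ctimes$, that is, an element of $\Hom(\widehat T\otimes\widehat K,\Ctimes)$. This decomposition is canonical, since it uses only the inclusions and projections of the product. It gives a canonical isomorphism of the alternating-form groups, and transporting through the previous paragraph yields \eqref{eq:kunneth}.

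For the explicit representative, compute the alternating form of $J_{\psi,\chi,\xi}$. It is a $2$-cocycle because it is a product of pullbacks of cocycles and a bicharacter, and bicharacters are cocycles. Its alternating form is $\operatorname{Alt}_\psi(x,y)\,\chi(x,\beta)\chi(y,\alpha)^{-1}\operatorname{Alt}_\xi(\alpha,\beta)$. This matches the decomposition with $\omega_T=\operatorname{Alt}_\psi$, mixed part $\chi$, and $\omega_K=\operatorname{Alt}_\xi$, so the map $(\psi,\chi,\xi)\mapsto J_{\psi,\chi,\xi}$ is the inverse isomorphism.

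The description of the inverse then follows. Restricting $J$ to $\widehat T\times\{1\}$ recovers $\omega_T$, hence $[\psi]$ by injectivity, and likewise for $\widehat K$. The mixed part is $\chi_J(x,\beta)=J((x,1),(1,\beta))/J((1,\beta),(x,1))$. With $R$ trivial on the commutative $\mathscr O(G)$, this equals $R^J((x,1),(1,\beta))$ up to the paper's convention for $R^J=J_{21}^{-1}J$; I would check that orientation explicitly.

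The main obstacle I expect is surjectivity and injectivity for the finite factor $\widehat K$ with torsion, together with matching the sign/orientation convention of $R^J$. Everything else is bilinear bookkeeping.
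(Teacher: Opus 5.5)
Your proposal is correct and takes essentially the paper's route: identify $H^2(A,\Ctimes)$ with alternating bicharacters via $[J]\mapsto R^J$ (divisibility of $\Ctimes$ plus the universal coefficient theorem), split an alternating bicharacter on $\widehat T\times\widehat K$ into the two diagonal blocks and the mixed bicharacter, and check the alternating form of $J_{\psi,\chi,\xi}$. Your direct injectivity and surjectivity arguments only fill in details the paper leaves to that citation, and the orientation you wanted to check works out: on group-likes of $\mathbb C[\widehat T\times\widehat K]$ with trivial $R$ the convolution is pointwise, so $R^J(a,b)=J(a,b)/J(b,a)$, which is exactly your $\operatorname{Alt}_J$.
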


\begin{proof}
Because $\Ctimes$ is divisible, it is an injective $\mathbb Z$-module.  The
universal coefficient sequence therefore identifies
\[
 H^2(A,\Ctimes)\cong\Hom(\textstyle\bigwedge^2 A,\Ctimes)
\]
for every abelian group $A$; under this identification, a cocycle class is sent to
its alternating bicharacter $R^J:=J_{21}^{-1}J$.  The canonical decomposition
\[
 \bigwedge^2(\widehat T\times \widehat K)
 \cong
 \bigwedge^2\widehat T\oplus(\widehat T\otimes_{\mathbb Z}\widehat K)\oplus\bigwedge^2\widehat K
\]
then gives \eqref{eq:kunneth}.  Formula \eqref{eq:mixed-cocycle} is a normalized
$2$-cocycle, and its alternating bicharacter has the prescribed three components.
This also proves the formula for the inverse.
\end{proof}

Equivalently, a bicharacter $\chi$ is the same as a homomorphism
\[
 \lambda:\widehat T\longrightarrow K=\widehat {\widehat K},
 \qquad
 \chi(x,\beta)=\beta(\lambda(x)).
\]
In these terms,
\begin{equation}\label{eq:R-block}
 R^J((x,\alpha),(y,\beta))
 =R^\psi(x,y)\,
 \beta(\lambda(x))\,
 \alpha(\lambda(y))^{-1}\,
 R^\xi(\alpha,\beta).
\end{equation}

\subsection{Minimality}

\begin{proposition}\label{prop:radical}
For the cocycle in \eqref{eq:mixed-cocycle}, written by means of $\lambda$, one has
\[
 \Rad(R^J)=
 \left\{
 (x,\alpha)\in \widehat T\times \widehat K\ \middle|\
 \begin{array}{l}
 R^\psi(x,-)=\alpha\circ\lambda,\\[1mm]
 R^\xi(\alpha,-)=\operatorname{ev}_{\lambda(x)}^{-1}
 \end{array}
 \right\}.
\]
Consequently, $J$ is minimal if and only if this set is trivial.
\end{proposition}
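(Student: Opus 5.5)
The radical is computed directly from the block formula \eqref{eq:R-block}. Then the minimality statement is transferred from cotriangular Hopf algebras to the nondegeneracy of $R^J$. Fix $(x,\alpha)\in\widehat T\times\widehat K$. By definition, $(x,\alpha)\in\Rad(R^J)$ exactly when $R^J((x,\alpha),(y,\beta))=1$ for all $(y,\beta)$. Since $R^J$ is a bicharacter, it suffices to test separately on the generating subgroups $\widehat T\times\{1\}$ and $\{1\}\times\widehat K$.

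Setting $\beta=1$ in \eqref{eq:R-block} gives $R^\psi(x,y)\,\alpha(\lambda(y))^{-1}=1$ for all $y$. This is the first displayed condition $R^\psi(x,-)=\alpha\circ\lambda$. Setting $y=1$ gives $\beta(\lambda(x))\,R^\xi(\alpha,\beta)=1$ for all $\beta$. Writing $\operatorname{ev}_{\lambda(x)}(\beta)=\beta(\lambda(x))$, this is exactly $R^\xi(\alpha,-)=\operatorname{ev}_{\lambda(x)}^{-1}$. Conversely, the two conditions together give triviality on all $(y,\beta)$, because $R^J((x,\alpha),(y,\beta))$ is the product of its values at $(y,1)$ and $(1,\beta)$. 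Before doing this I would verify \eqref{eq:R-block} itself. From \eqref{eq:mixed-cocycle}, $R^J=J_{21}^{-1}J$ splits multiplicatively into $R^\psi$, $R^\xi$, and the mixed term $\chi(x,\beta)\chi(y,\alpha)^{-1}$, which equals $\beta(\lambda(x))\alpha(\lambda(y))^{-1}$. The only care needed is the sign and inversion conventions in $R^J$.

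For the minimality statement, I would use that $\mathscr O(G)=\mathbb C[\widehat T\times\widehat K]$ is cocommutative with trivial cotriangular structure $R=\varepsilon\otimes\varepsilon$. Hence $R^J=J_{21}^{-1}J$ is the bicharacter above, extended linearly to the group algebra. Minimality means that $R^J$ is nondegenerate as a form on $\mathbb C[\Gamma]\otimes\mathbb C[\Gamma]$, or, in the formulation of \cite{G1}, that the minimal quotient is all of $\mathbb C[\Gamma]^J$. This quotient is the group algebra of $\Gamma/\Rad(R^J)$, so minimality is equivalent to $\Rad(R^J)$ being trivial. The main obstacle is this identification of the minimal cotriangular quotient with $\mathbb C[\Gamma/\Rad(R^J)]$. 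I would argue it as follows: the left kernel of the linear form $R^J$ is spanned by the differences $g-gr$ with $r\in\Rad(R^J)$, since the characters $R^J(g,-)$ of $\Gamma$ are linearly independent when distinct. Thus $R^J$ is nondegenerate precisely when $g\mapsto R^J(g,-)$ is injective, that is, when $\Rad(R^J)=\{1\}$.
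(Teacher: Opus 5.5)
Your proposal is correct and follows the paper's proof: like the paper, you use \eqref{eq:R-block} to test the character $R^J((x,\alpha),-)$ separately on $\widehat T\times\{1\}$ and on $\{1\}\times\widehat K$. You also verify \eqref{eq:R-block} and show, via linear independence of distinct characters, that the kernel of $R^J$ on $\mathbb C[\widehat T\times\widehat K]$ is spanned by the differences $g-gr$ with $r\in\Rad(R^J)$; this correctly fills in the ``consequently'' step, which the paper treats as the definition of minimality for diagonalizable groups.
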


\begin{proof}
Fix $(x,\alpha)\in \widehat T\times \widehat K$. Then setting $\beta=1$ in
\eqref{eq:R-block} shows that 
$$R^J((x,\alpha),(y,1))=1$$
for every $y$ exactly when
$R^\psi(x,-)=\alpha\circ\lambda$.  Setting $y=1$ in
\eqref{eq:R-block} shows that
$$R^J((x,\alpha),(1,\beta))=1$$
for every $\beta$ exactly when
$R^\xi(\alpha,-)=\operatorname{ev}_{\lambda(x)}^{-1}$.  Since the two factors
generate $\widehat T\times \widehat K$, these conditions are necessary and sufficient.
\end{proof}

\subsection{Comparison with the equivariantization classification}
\label{subsec:kunneth-versus-equivariantization}

The classification in Theorem~\ref{thm:kunneth} and the classification in
Corollary~\ref{cor:hopf-cocycle-classification} describe the same
gauge-equivalence classes, but they use rather different coordinates.  The
K\"unneth coordinates are canonical and cohomological, whereas the data of
\S\ref{sec:fiber-functors} are orbit--stabilizer data arising from
equivariantization.  We now make the relation between them explicit.

Let
\[
 [J]=[J_{\psi,\chi,\xi}]
 \in H^2(\widehat T\times \widehat K,\Ctimes)
\]
be written in the K\"unneth form of Theorem~\ref{thm:kunneth}, and let
\[
 \lambda:\widehat T\longrightarrow K=\widehat{\widehat K},
 \qquad
 \chi(x,\beta)=\beta(\lambda(x)),
\]
be the homomorphism corresponding to the mixed bicharacter.  Define
homomorphisms
\[
 r_\psi:\widehat T\longrightarrow T,
 \qquad
 y(r_\psi(x))=R^\psi(x,y),
\]
and
\[
 r_\xi:\widehat K\longrightarrow K,
 \qquad
 \beta(r_\xi(\alpha))=R^\xi(\alpha,\beta).
\]
Also let
\[
 \lambda^\vee:\widehat K\longrightarrow T,
 \qquad
 x(\lambda^\vee(\alpha))=\alpha(\lambda(x)),
\]
be the homomorphism dual to $\lambda$.  Let
\[
 T_\psi:=\overline{\operatorname{im}(r_\psi)}
       =\Rad(R^\psi)^\perp\subseteq T,
 \qquad
 N_\xi:=\operatorname{im}(r_\xi)
       =\Rad(R^\xi)^\perp\subseteq K.
\]
Thus, $T_\psi$ and $N_\xi$ are the supports of the two diagonal K\"unneth
components.

We have the following dictionary between the two classifications.

\begin{proposition}
\label{prop:kunneth-equivariantization-dictionary}
Let $\mathcal D_J$ be the equivalence class of rank-one admissible data in
\S\ref{sec:fiber-functors} corresponding to $J$.  Then one may choose a
representative
\[
 \mathcal D_J=(S_J,\Psi_J,L_J,U_J,\tau_J,\mu_J)
\]
with the following properties:
\begin{enumerate}

\item The subgroup and orbit data are
\begin{equation}\label{eq:dictionary-S-L}
 N:=N_\xi,
 \qquad
 L_J=\lambda(\widehat T)N,
 \qquad
 S_J=\lambda^{-1}(N).
\end{equation}
There is a canonical isomorphism
\begin{equation}\label{eq:dictionary-orbit}
 \widehat T/S_J\xrightarrow{\cong}L_J/N,
 \qquad
 xS_J\longmapsto\lambda(x)N,
\end{equation}
and, under this identification, $U_J$ is the left translation action of
$L_J$ on $L_J/N$.  In particular, the stabilizer of the base point is $N$.

\item The restriction class $[\xi]\in H^2(\widehat K,\Ctimes)$ determines the
nondegenerate stabilizer class
\[
 [\mu_J]\in H^2(N,\Ctimes).
\]
More precisely, if $B=\Rad(R^\xi)$, then
$\widehat K/B\cong\widehat N$, and the alternating form induced by $R^\xi$ on
$\widehat K/B$ is nondegenerate.  After choosing a simple block of
$\mathbb C^\xi[B]$, the stabilizer $N$ acts projectively on its unique simple
module; the multiplier of this projective action is $\mu_J$, and it is
nondegenerate.  The resulting cohomology class $[\mu_J]$ depends only on
$[\xi]$, up to the harmless inversion dictated by the convention for
projective representations.

The cochains $\Psi_J$ and $\tau_J$ encode the toral and mixed components
relative to this finite block; hence they can also involve $\xi$ through the
chosen projective intertwiners.  They depend on choices of cocycle
representatives and intertwiners, but their equivalence class does not.  In
particular, the isotropy cocycle
\[
 \psi_{\mathcal D_J}(s,t)=\Psi_J(s,t;S_J)
\]
need not coincide with the restriction
$\psi|_{S_J\times S_J}$; it is the Mackey obstruction obtained after the
mixed and finite data have been incorporated.

\item The K\"unneth coordinates are recovered from the cocycle associated
with $\mathcal D_J$ by
\begin{equation}\label{eq:recover-kunneth}
 \left[
 J\big|_{(\widehat T\times\{1\})^2}
 \right]=[\psi],
 \quad
 R^J((x,1),(1,\beta))=\chi(x,\beta),
 \quad
 \left[
 J\big|_{(\{1\}\times \widehat K)^2}
 \right]=[\xi].
\end{equation}
Consequently, two rank-one admissible data give the same K\"unneth triple if
and only if they are equivalent in the sense of
\S\ref{sec:fiber-functors}.

\item Let $H_J\leq T\times K$ be the support of $J$, and let
$\Lambda_{\mathcal D_J}$ be the isotropy radical from
Proposition~\ref{prop:factorization}.  Then
\begin{equation}\label{eq:dictionary-support}
 \pi(H_J)=L_J=\lambda(\widehat T)N_\xi
\end{equation}
and
\begin{equation}\label{eq:dictionary-isotropy-radical}
 \Lambda_{\mathcal D_J}
 =\operatorname{pr}_{\widehat T}\Rad(R^J)
 =
 \left\{
 x\in \widehat T\ \middle|\
 \begin{array}{l}
 \text{there exists }\alpha\in \widehat K\text{ such that}\\[1mm]
 R^\psi(x,-)=\alpha\circ\lambda,\,
 R^\xi(\alpha,-)=\operatorname{ev}_{\lambda(x)}^{-1}
 \end{array}
 \right\}.
\end{equation}

\end{enumerate}
\end{proposition}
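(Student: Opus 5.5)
The plan is to compute the module category attached to $J$ directly, rather than through the abstract bijection. For a diagonalizable group, the fiber functor of $J$ corresponds to the rank-one $\Rep(G)$-module category $\Vect$. Its $\Rep(G)$-action is $\delta_{(x,\alpha)}\mapsto\Ctimes$-graded twisting by $J$. I would restrict this action along $\Rep(T)\subseteq\Rep(G)$ and identify the restricted $\Rep(T)$-module category together with its $K$-equivariant structure.

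The cleanest route is through Proposition \ref{prop:equivariantization-modules} and the crossed-product picture. Equivalently, realize the $\Rep(G)$-module category as the category of $J$-twisted $\widehat T\times\widehat K$-graded modules over a suitable algebra. Since $\Rep(K)=\Vect_{\widehat K}$, a first step is to decompose the finite part. The twisted group algebra $\mathbb C^\xi[\widehat K]$ has center $\mathbb C[B]$ with $B=\Rad(R^\xi)$. Its simple blocks are indexed by $\widehat B$, and each block is a matrix algebra on which $\widehat K/B\cong\widehat N$ acts nondegenerately. Dually, $N=\operatorname{im}(r_\xi)$ is a stabilizer carrying a projective action with nondegenerate multiplier; this gives item (2) and the class $[\mu_J]$.

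Next I would incorporate the mixed term. The bicharacter $\chi(x,\beta)=\beta(\lambda(x))$ makes $\delta_{(x,1)}$ act on the blocks of $\mathbb C^\xi[\widehat K]$ by translating the central character, i.e.\ by $\lambda(x)$ acting on $K/N$-cosets. Hence the simples of the restricted $\Rep(T)$-module category are indexed by $\widehat T/\lambda^{-1}(N)$, embedded in $L_J/N$ with $L_J=\lambda(\widehat T)N$ via $x\mapsto\lambda(x)N$. This gives $S_J$, the isomorphism \eqref{eq:dictionary-orbit}, and $U_J$ as left translation. The data $\Psi_J,\tau_J$ would then be read off by choosing intertwiners between translated blocks, and they are well defined up to equivalence.

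For item (3), I would use the inverse in Theorem \ref{thm:kunneth}: the three Künneth coordinates are computed from $J$ itself. Combined with Corollary \ref{cor:hopf-cocycle-classification} and Theorem \ref{thm:kunneth}, both sets biject with gauge classes, which gives the last sentence. For item (4), I would apply Theorem \ref{prop:factorization}. Its first claim gives $\pi(H_J)=L_J$. For the second, note that $H_J\cap T=\Lambda_{\mathcal D_J}^\perp$. Independently, $H_J=\Rad(R^J)^\perp$, since the support of a cocycle on a group algebra of an abelian group is the annihilator of the radical. Intersecting with $T$ gives $(H_J\cap T)^\perp=\operatorname{pr}_{\widehat T}\Rad(R^J)$, and Proposition \ref{prop:radical} supplies the explicit description.

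The main obstacle is the second paragraph: proving carefully that the restricted module category has exactly the orbit $\widehat T/\lambda^{-1}(N)$ with stabilizer $N$. This requires showing that blocks permuted by $\lambda(\widehat T)$ correspond exactly to $L_J/N$, and it requires care with the duality between $\widehat B$ and $K/N$.
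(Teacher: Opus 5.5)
Your proposal is essentially the paper's argument. It uses the same ingredients: the blocks of $\mathbb C^\xi[\widehat K]$ indexed by $\widehat B\cong K/N_\xi$; translation of blocks by $\lambda(x)$ through $\chi(x,-)$, which gives $L_J/N$, $S_J=\lambda^{-1}(N)$ and $U_J$; the projective action of $N$ on a simple block module, whose nondegeneracy the paper gets from the count $(\dim V)^2=|\widehat K/B|=|N|$; the inverse K\"unneth map for (3); and Theorem~\ref{prop:factorization} together with $H_J=\Rad(R^J)^\perp$ for (4). The one difference there is that the paper reads off $\pi(H_J)=\lambda(\widehat T)N_\xi$ directly from $\rho_J$.

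Two corrections. First, $\M(S_J,\Psi_J)$ is the $\Rep(T)$-summand of the category of $\mathbb C^\xi[\widehat K]$-modules on which the inflated $\delta_{(x,1)}$ permute blocks, as in your second and third paragraphs. It is not the literal restriction of the rank-one category $\Vect$ along $\Rep(T)\subseteq\Rep(G)$ from your first paragraph, which has a single simple object and would force $S=\widehat T$. Second, the step $(H_J\cap T)^\perp=\operatorname{pr}_{\widehat T}\Rad(R^J)$ needs the extension of characters from $\pi(H_J)\subseteq K$ to $K$; it does not follow from intersecting with $T$ alone.
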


\begin{proof}
Consider first the twisted group algebra $\mathbb C^\xi[\widehat K]$, and let
\[
 B:=\Rad(R^\xi).
\]
Since the restriction of $\xi$ to $B$ has trivial alternating bicharacter,
it is a coboundary; after replacing $\xi$ by a cohomologous cocycle, we may
therefore assume that this restriction is trivial.  Then
\[
 Z(\mathbb C^\xi[\widehat K])=\mathbb C[B],
\]
and the simple matrix blocks of $\mathbb C^\xi[\widehat K]$ are indexed by
\[
 \widehat{B}\cong K/N_\xi.
\]

For $x\in \widehat T$, tensoring a projective $\widehat K$-module by the character
$\chi(x,-)$ translates its block by $\lambda(x)N_\xi$.  Hence the orbit of
the base block is
\[
 \lambda(\widehat T)N_\xi/N_\xi=L_J/N_\xi,
\]
and its stabilizer in $\widehat T$ is $\lambda^{-1}(N_\xi)$.  This gives
\eqref{eq:dictionary-S-L}, \eqref{eq:dictionary-orbit}, and the translation
action $U_J$.

The alternating form induced by $R^\xi$ on $\widehat K/B$ is nondegenerate.  The
natural action of the stabilizer $N_\xi$ on a chosen simple module $V$ in
the base block is projective; let $\mu_J$ be its multiplier.  Since
\[
 (\dim V)^2=|\widehat K/B|=|N_\xi|,
\]
the induced surjection
\[
 \mathbb C^{\mu_J}[N_\xi]\twoheadrightarrow\End(V)
\]
is an isomorphism.  Hence $\mu_J$ is nondegenerate.  Choosing compatible
intertwiners then produces $\Psi_J$, $\tau_J$, and $\mu_J$; changing these
choices changes only the representative of the admissible data.
Formula~\eqref{eq:recover-kunneth} is exactly the inverse map in
Theorem~\ref{thm:kunneth}.

For the support statement, define
\begin{equation}\label{eq:rho-J}
 \rho_J:\widehat T\times \widehat K\longrightarrow T\times K,
 \qquad
 \rho_J(x,\alpha)
 =
 \bigl(
 r_\psi(x)\lambda^\vee(\alpha)^{-1},
 \lambda(x)r_\xi(\alpha)
 \bigr).
\end{equation}
Using \eqref{eq:R-block}, one obtains
\[
 R^J((x,\alpha),(y,\beta))
 =(y,\beta)\bigl(\rho_J(x,\alpha)\bigr).
\]
Therefore
\[
 \Rad(R^J)=\ker(\rho_J),
 \qquad
 H_J=\overline{\operatorname{im}(\rho_J)}.
\]
Taking the second component in \eqref{eq:rho-J} gives
\[
 \pi(H_J)
 =
 \lambda(\widehat T)\operatorname{im}(r_\xi)
 =
 \lambda(\widehat T)N_\xi,
\]
which proves \eqref{eq:dictionary-support}.

Since
\[
 H_J^\perp=\Rad(R^J),
\]
elementary character duality gives
\[
 (H_J\cap T)^\perp=\operatorname{pr}_{\widehat T}\Rad(R^J).
\]
Indeed, a character of $T$ trivial on $H_J\cap T$ descends to a character of
\[
 H_J/(H_J\cap T)\subseteq K,
\]
and every character of a subgroup of the finite abelian group $K$ extends
to $K$.  Proposition~\ref{prop:factorization} now gives
\[
 \Lambda_{\mathcal D_J}
 =(H_J\cap T)^\perp
 =\operatorname{pr}_{\widehat T}\Rad(R^J).
\]
The explicit description in
\eqref{eq:dictionary-isotropy-radical} follows from
Proposition~\ref{prop:radical}.
\end{proof}

The preceding dictionary makes the comparison of the two minimality criteria
completely transparent.

\begin{corollary}
\label{cor:kunneth-equivariantization-minimality}
With the notation above, the following conditions are equivalent:
\begin{enumerate}

\item $J$ is minimal.

\item
$
 \Rad(R^J)=\{1\}$.

\item
$
 \Rad(R^\psi)=\{1\}$ and 
$
 \ker(\lambda^\vee)\cap\Rad(R^\xi)=\{1\}$.

\item
$
 T_\psi=T$ and $K=\lambda(\widehat T)N_\xi$.

\item the \S\ref{sec:fiber-functors} datum satisfies
$
 L_J=K$ and $\Lambda_{\mathcal D_J}=\{1\}$.
\end{enumerate}
Thus condition~(5) is precisely the minimality criterion of
Proposition~\ref{prop:factorization}, while conditions~(2)--(4) are its
K\"unneth formulations.
\end{corollary}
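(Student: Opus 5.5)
We will prove the equivalences along the chain $(1)\Leftrightarrow(2)\Leftrightarrow(3)\Leftrightarrow(4)$, and then $(4)\Leftrightarrow(5)$ through the dictionary of Proposition~\ref{prop:kunneth-equivariantization-dictionary}. The first two links use only the K\"unneth formulas. $(1)\Leftrightarrow(2)$ holds by definition: $G$ is diagonalizable, so $J$ is a group $2$-cocycle on $\widehat T\times\widehat K$, and $R^J=J_{21}^{-1}J$ is nondegenerate exactly when its radical is trivial. This is the ``consequently'' of Proposition~\ref{prop:radical}.

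For $(2)\Leftrightarrow(3)$ we use Proposition~\ref{prop:radical} directly. The homomorphism $\rho_J$ of \eqref{eq:rho-J} satisfies $\Rad(R^J)=\ker(\rho_J)$, and its components are $r_\psi(x)\lambda^\vee(\alpha)^{-1}$ and $\lambda(x)r_\xi(\alpha)$.
\begin{itemize}
\item If $\Rad(R^J)$ is trivial, test pairs of the form $(x,1)$ and $(1,\alpha)$. For $\alpha=1$, an element of $\Rad(R^\psi)\cap\ker\lambda$ lies in the radical. For $x=1$, an element of $\ker\lambda^\vee\cap\Rad(R^\xi)$ lies in it.
\item This gives the second condition of (3), but only a weak form of the first. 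To get $\Rad(R^\psi)=\{1\}$ itself, we expect to use finiteness: a character $x\in\Rad(R^\psi)$ has $\lambda(x)\in K$. We then try to find $\alpha$ with $R^\xi(\alpha,-)=\mathrm{ev}_{\lambda(x)}^{-1}$ and $\alpha\circ\lambda$ trivial, i.e.\ $\alpha\in\ker\lambda^\vee$.
\item Conversely, given (3) and $(x,\alpha)\in\ker\rho_J$, we need to show $x=1$ and then $\alpha=1$.
\end{itemize}
This is the step we expect to be the main obstacle. The conditions on $x$ and $\alpha$ are coupled, and the clean form stated in (3) may require a careful argument, or even a correction that restates (3) as the coupled conditions. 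Our first attempt will be to dualize: (3) says $r_\psi$ has dense image, and $\lambda(\widehat T)$ together with $\operatorname{im}(r_\xi)$ generates $K$. So we will prove $(2)\Leftrightarrow(4)$ directly and treat (3) as the dual form of (4).

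For $(2)\Leftrightarrow(4)$ we use $H_J=\overline{\operatorname{im}\rho_J}$ and $H_J^\perp=\Rad(R^J)$. Then $\Rad(R^J)$ is trivial iff $H_J=T\times K$. By \eqref{eq:dictionary-support}, $\pi(H_J)=K$ iff $K=\lambda(\widehat T)N_\xi$. The remaining condition, $H_J\cap T=T$, translates by \eqref{eq:dictionary-isotropy-radical} into triviality of $\operatorname{pr}_{\widehat T}\Rad(R^J)$. We then compare this with $T_\psi=T$, i.e.\ $\Rad(R^\psi)=\{1\}$, using the subgroup-extension fact for finite abelian $K$ already invoked in the dictionary's proof.

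Finally, $(4)\Leftrightarrow(5)$ (equivalently $(1)\Leftrightarrow(5)$) is immediate from the dictionary. By \eqref{eq:dictionary-support}, $L_J=K$ iff $K=\lambda(\widehat T)N_\xi$, and by \eqref{eq:dictionary-isotropy-radical}, $\Lambda_{\mathcal D_J}=\operatorname{pr}_{\widehat T}\Rad(R^J)$. So (5) says $\pi(H_J)=K$ and $H_J\cap T=T$, which is exactly minimality by \eqref{eq:minimality-full-data} of Proposition~\ref{prop:factorization}.
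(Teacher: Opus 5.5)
There is a genuine gap. Three of your links are fine: $(1)\Leftrightarrow(2)$; $(3)\Leftrightarrow(4)$ by annihilator duality, using $\Rad(R^\psi)^\perp=T_\psi$, $\ker(\lambda^\vee)^\perp=\lambda(\widehat T)$ and $\Rad(R^\xi)^\perp=N_\xi$; and $(1)\Leftrightarrow(5)$ via \eqref{eq:dictionary-support}, \eqref{eq:dictionary-isotropy-radical} and Theorem~\ref{prop:factorization}. But nothing in your proposal connects $\{(1),(2),(5)\}$ to $\{(3),(4)\}$.

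Every route you sketch reduces to the comparison
\[
 \operatorname{pr}_{\widehat T}\Rad(R^J)=\{1\}\iff\Rad(R^\psi)=\{1\}.
\]
In one direction this means upgrading $\Rad(R^\psi)\cap\ker(\lambda)=\{1\}$ to $\Rad(R^\psi)=\{1\}$; in the other it means decoupling $x$ from $\alpha$. You leave this comparison open, and rerouting through (4) only relocates it. The subgroup-extension fact you cite does not settle it; it only gives $(H_J\cap T)^\perp=\operatorname{pr}_{\widehat T}\Rad(R^J)$. In fact the two subgroups differ in general. For $T=\Gm$, $K=\mathbb{Z}/2\mathbb{Z}$, $\psi=\xi=1$ and $\lambda$ the reduction mod $2$, one gets $\Rad(R^\psi)=\mathbb Z$ but $\operatorname{pr}_{\widehat T}\Rad(R^J)=2\mathbb Z$. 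Only their triviality is equivalent.

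Your tactic for $(2)\Rightarrow(3)$ also fails in general. It asks for $\alpha\in\ker(\lambda^\vee)$ with $R^\xi(\alpha,-)=\operatorname{ev}_{\lambda(x)}^{-1}$, but for $\xi=1$ this already forces $\lambda(x)=1$. Your suspicion that (3) needs correcting is unfounded: it holds exactly as stated.

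The missing idea is to pass to powers, using that $\widehat T$ is torsion-free and $K$ is finite.
\begin{itemize}
\item For $(2)\Rightarrow(3)$: if $1\neq x\in\Rad(R^\psi)$, then $x^{|K|}\in\Rad(R^\psi)\cap\ker(\lambda)$ and $x^{|K|}\neq1$. So $(x^{|K|},1)$ is a nontrivial element of $\Rad(R^J)$. Do not look for an $\alpha$ paired with $x$ itself.
\item For $(3)\Rightarrow(2)$: take $(x,\alpha)\in\Rad(R^J)$ and let $m$ be the order of $\alpha$. Since the radical is a subgroup, $(x^m,1)\in\Rad(R^J)$, so $x^m\in\Rad(R^\psi)=\{1\}$ and hence $x=1$. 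Then $\alpha\in\ker(\lambda^\vee)\cap\Rad(R^\xi)=\{1\}$.
\end{itemize}
The same two observations give $\operatorname{pr}_{\widehat T}\Rad(R^J)=\{1\}\iff\Rad(R^\psi)=\{1\}$. This is exactly what the paper uses to link (4) with (5), and it is what your $(2)\Leftrightarrow(4)$ route needs.
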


\begin{proof}
The equivalence of (1) and (2) is the definition of minimality for a
diagonalizable group.  

If (2) holds, then testing the radical on $(x,1)$ and
$(1,\alpha)$ gives
\[
 \ker(\lambda)\cap\Rad(R^\psi)=\{1\},
 \qquad
 \ker(\lambda^\vee)\cap\Rad(R^\xi)=\{1\}.
\]
Since $\lambda(\widehat T)$ is finite, $\ker(\lambda)$ has finite index in the
torsion-free group $\widehat T$.  Hence every nontrivial subgroup of $\widehat T$ meets
$\ker(\lambda)$ nontrivially, and therefore
\[
 \ker(\lambda)\cap\Rad(R^\psi)=\{1\}
 \quad\Longleftrightarrow\quad
 \Rad(R^\psi)=\{1\}.
\]
This proves $(2)\Rightarrow(3)$.

Conversely, assume (3), and let
\[
 (x,\alpha)\in\Rad(R^J).
\]
If $m$ is the order of $\alpha$, then
\[
 (x^m,1)\in\Rad(R^J)
\]
because $R^J$ is a bicharacter.  Hence
\[
 x^m\in\Rad(R^\psi)=\{1\}.
\]
Since $\widehat T$ is torsion-free, $x=1$.  It follows that
\[
 \alpha\in
 \ker(\lambda^\vee)\cap\Rad(R^\xi)
 =\{1\},
\]
so (2) holds.

By annihilator duality,
\[
 \Rad(R^\psi)^\perp=T_\psi,
 \qquad
 \ker(\lambda^\vee)^\perp=\lambda(\widehat T),
 \qquad
 \Rad(R^\xi)^\perp=N_\xi.
\]
Therefore (3) and (4) are equivalent.  

Finally,
\eqref{eq:dictionary-support} gives
\[
 L_J=K
 \quad\Longleftrightarrow\quad
 \ker(\lambda^\vee)\cap\Rad(R^\xi)=\{1\}.
\]
Moreover, by \eqref{eq:dictionary-isotropy-radical},
\[
 \Lambda_{\mathcal D_J}=\{1\}
 \quad\Longleftrightarrow\quad
 \Rad(R^\psi)=\{1\}.
\]
Indeed,
\[
 \Rad(R^\psi)\cap\ker(\lambda)
 \subseteq
 \operatorname{pr}_{\widehat T}\Rad(R^J).
\]
Conversely, if
\[
 x\in\operatorname{pr}_{\widehat T}\Rad(R^J),
\]
choose $\alpha$ with $(x,\alpha)\in\Rad(R^J)$ and raise to the order of
$\alpha$.  A power of $x$ then lies in
\[
 \Rad(R^\psi)\cap\ker(\lambda).
\]
Since $\widehat T$ is torsion-free and $\ker(\lambda)$ has finite index, the asserted
equivalence follows.  This proves the equivalence with (5).
\end{proof}

\begin{remark}
\label{rem:mixed-component-comparison}
The subgroup $N_\xi$ (and not $L_J$) is the subgroup carrying the
nondegenerate finite stabilizer cocycle $\mu_J$.  The mixed component enlarges
it to
\[
 L_J=\lambda(\widehat T)N_\xi,
\]
and the quotient
\[
 L_J/N_\xi\cong \widehat T/S_J
\]
is exactly the transitive orbit appearing in the rank-one criterion of
\S\ref{sec:fiber-functors}.  Thus, even for a minimal cocycle, one may
have
\[
 N_\xi\subsetneq K
\]
and a degenerate restriction $\xi$ on $\widehat K$; the missing finite directions are
supplied by $\lambda(\widehat T)$.

On the torus side the situation is different: since $\widehat T$ is torsion-free and
$\widehat K$ is finite, a nonzero element of $\Rad(R^\psi)$ always has a nonzero power
in $\ker(\lambda)$ and therefore gives a nontrivial element of
$\Rad(R^J)$.  Hence minimality forces $R^\psi$ itself to be nondegenerate.
The mixed component can compensate for degeneracy of the finite block, but
not for degeneracy of the toral block.

If the mixed component is trivial, then $\lambda=1$, so
\[
 S_J=\widehat T,
 \qquad
 L_J=N_\xi,
 \qquad
 U_J=\id.
\]
In this special case minimality does reduce to separate nondegeneracy of the
two diagonal blocks.  By contrast, the image of the mixed component in
$K/N_\xi$ is manifested in \S\ref{sec:fiber-functors} by the quotient
$\widehat T/S_J$ and its transitive $L_J$-action.  The part of $\lambda(\widehat T)$ lying
inside $N_\xi$ does not enlarge the orbit; instead, it is absorbed into the
cochains $\Psi_J$ and $\tau_J$ and can change the isotropy cocycle even when
$S_J=\widehat T$. \qed
\end{remark}

\begin{remark}
The two classifications have complementary advantages.  The K\"unneth
classification is canonical, carries the abelian group structure on
\[
 H^2(\widehat T\times \widehat K,\Ctimes)
\]
componentwise, and displays the mixed term explicitly.  The
equivariantization classification is less canonical at the cochain level,
but it records the orbit, stabilizer, support, and factorization of the
associated module category, and it continues to make sense for nonsplit
extensions and for noncommutative component groups.

Notice also that the cochain $\tau_J$ in the admissible datum is not, in
general, a canonical homomorphism $K\to T$.  The canonical mixed invariant is
\[
 \lambda:\widehat T\to K,
\]
or equivalently its dual
\[
 \lambda^\vee:\widehat K\to T.
\]
Converting this into a map $K\to T$ would require a noncanonical
identification $K\cong\widehat K$.  In the commutative direct-product case,
the identities above show that the two classifications agree exactly. \qed
\end{remark}

\begin{example}[A purely mixed class]\label{ex:purely-mixed}
Let $G:=\Gm\times \mathbb{Z}/2\mathbb{Z}$, so $\widehat T=\mathbb Z$ and $\widehat K=\mathbb{Z}/2\mathbb{Z}$.  Writing elements as
$(m,\varepsilon)$ with $\varepsilon\in\{0,1\}$, define
\[
 J((m,\varepsilon),(n,\delta))=(-1)^{m\delta}.
\]
Both restrictions of $J$ to the two factors are trivial, but
\[
 R^J((1,0),(0,1))=-1.
\]
Thus $[J]$ is the nontrivial mixed class in
$\Hom(\mathbb Z\otimes (\mathbb{Z}/2\mathbb{Z}),\Ctimes)$.  It is invisible on both diagonal factors and is detected entirely by the mixed component. \qed
\end{example}

\begin{example}[A minimal cocycle with degenerate finite block]
Let
$$
 G:=(\Gm)^2\times (\mathbb{Z}/2\mathbb{Z}),$$
 so that
 $$\widehat T=\mathbb Z^2,
 \qquad
 \widehat K=\mathbb{Z}/2\mathbb{Z},$$
and choose $q\in\Ctimes$ that is not a root of unity.  Define
\[
 J\bigl(((a,b),\varepsilon),((c,d),\delta)\bigr)
 =q^{ad}(-1)^{a\delta}.
\]
Then
\[
 R^J\bigl(((a,b),\varepsilon),((c,d),\delta)\bigr)
 =q^{ad-cb}(-1)^{a\delta+c\varepsilon}.
\]
If $((a,b),\varepsilon)$ lies in the radical, testing against $((0,1),0)$ gives
$q^a=1$, hence $a=0$.  Testing against $((1,0),0)$ then gives
$q^{-b}(-1)^\varepsilon=1$, hence $b=0$ and $\varepsilon=0$.  Therefore $R^J$ is
nondegenerate, although the restriction of $J$ to $\mathbb{Z}/2\mathbb{Z}\times \mathbb{Z}/2\mathbb{Z}$ is trivial.
This illustrates why minimality must be tested on the full block bicharacter. \qed
\end{example}


\end{document}